\def\det{\mathop{\hbox{\rm det}}}
\def\diag{\mathop{\hbox{\rm diag}}}
\def\Null{\mathop{\hbox{\rm null}}}
\def\spose#1{\hbox to 0pt{#1\hss}}
\def\text #1{\hbox{\quad#1\quad}}
\def\nthinsp{\mskip -2   mu}
\def\F{_{\scriptscriptstyle F}}
\def\k{_k}
\def\N{\mathbb{N}}
\def\superstar{^{\raise 0.5pt\hbox{$\nthinsp *$}}}
\def\SUPERSTAR{^{\raise 0.5pt\hbox{$*$}}}
\def\lamstarT {\lambda^{\raise 0.5pt\hbox{$\nthinsp *$}T}}
\def\mubar{\skew3\bar \mu}
\def\sigmabar{\bar\sigma}
\def\hbar{\skew{4.2}\bar h}
		\def\bkE{{\rm I\kern-.17em E}}
		\def\bk1{{\rm 1\kern-.17em l}}
		\def\bkD{{\rm I\kern-.17em D}}
		\def\bkR{{\rm I\kern-.17em R}}
		\def\bkP{{\rm I\kern-.17em P}}
		\def\bkY{{\bf \kern-.17em Y}}
		\def\bkZ{{\bf \kern-.17em Z}}
		\def\beq{\begin{eqnarray}}
		\def\bc{\begin{center}}
		\def\be{\begin{enumerate}}
		\def\bi{\begin{itemize}}
		\def\bS{\begin{slide}}
		\def\ec{\end{center}}
		\def\ee{\end{enumerate}}
		\def\ei{\end{itemize}}
		\def\eS{\end{slide}}
		\def\eeq{\end{eqnarray}}
	\def\cp2problem#1#2#3#4{\fbox
		 {\begin{tabular*}{0.9\textwidth}
			{@{}l@{\extracolsep{\fill}}l@{\extracolsep{6pt}}l@{\extracolsep{\fill}}c@{}}
				#1 & & $#4 $ 
			\end{tabular*}}}
		\renewcommand{\emph}[1]{\textbf{#1}}
		\def\bkE{{\rm I\kern-.17em E}}
		\def\bk1{{\rm 1\kern-.17em l}}
		\def\bkD{{\rm I\kern-.17em D}}
		\def\bkR{{\rm I\kern-.17em R}}
		\def\bkP{{\rm I\kern-.17em P}}
		\def\bkZ{{\bf{Z}}}
\newcommand {\beeq}[1]{\begin{equation}\label{#1}}
\newcommand {\eeeq}{\end{equation}}
\newcommand {\bea}{\begin{eqnarray}}
\newcommand {\eea}{\end{eqnarray}}
\def\texitem#1{\par\smallskip\noindent\hangindent 25pt
               \hbox to 25pt {\hss #1 ~}\ignorespaces}
\newtheorem{definition}{Definition}{\it}{}
{\it}{}
{\it}{}
{\it}{}
\newtheorem{lemma}{Lemma}{\it}{}
\newtheorem{theorem}{Theorem}{\it}{}
\newtheorem{remark}{Remark}{\it}{}
\newtheorem{assumption}{Assumption}{\it}{}
\newtheorem{standing}{Standing Assumption}
\def\diag{{\rm diag}}
\def\R{\mathbb{R}}
\def\F{\mathcal{F}}
\def\argmin{\mathop{\rm argmin}}
\def\x{\bs{x}}
\def\xhb{{\hat{\bs{x}}}}
\def\y{\bs{y}}
\def\F{{\bs{F}}}
\def\ber{\begin{eqnarray}}
\def\eer{\end{eqnarray}}
\def\bers{\begin{eqnarray*}}
	\def\eers{\end{eqnarray*}}
\def\be{\begin{equation}}
\def\ee{\end{equation}}
\def\1{{\bf 1}}
\newcommand{\bs}{\boldsymbol}
\newcommand{\mc}{\mathcal}
\newcommand{\proj}{\mathrm{proj}}
\newcommand{\col}{\mathrm{col}}
\renewcommand{\emph}{\textit}
\newcommand{\0}{\bs 0}
\def\k{{k \in \N}}
\newcommand{\alphamax}{\alpha_{\textnormal{max}}}
\newcommand{\gammamax}{\gamma_{\textnormal{max}}}
\newcommand{\taumax}{\tau_{\textnormal{max}}}
\newcommand{\Fa}{\F_{\! \textnormal{a}}}
\def\W{\bs{W}}
\newacronym{GNEP}{GNEP}{generalized Nash equilibrium problem}
\newacronym{NE}{NE}{Nash equilibrium}
\newacronym{NEP}{NEP}{Nash equilibrium problem}
\newacronym{GNE}{GNE}{generalized Nash equilibrium}
\newacronym{v-GNE}{v-GNE}{variational \gls{GNE}}
\newacronym{ISS}{ISS}{input-to-state-stable}
\newacronym{PPPA}{PPPA}{preconditioned proximal-point algorithm}
\newacronym{PPA}{PPA}{proximal-point algorithm}
\newacronym{VI}{VI}{variational inequality}
\newacronym{GAE}{GAE}{generalized aggregative equilibrium}
\newacronym{v-GAE}{v-GAE}{variational \gls{GAE}}
\newacronym{KKT}{KKT}{Karush-Kuhn-Tucker}
\newacronym{FQNE}{FQNE}{firmly quasinonexpansive}
\newacronym{FNE}{FNE}{firmly nonexpansive}
\DeclareSymbolFont{myletters}{OML}{ztmcm}{m}{it}
\DeclareMathSymbol{\uplambda}{\mathord}{myletters}{"15}
\def\QEDhereeqn{\eqno\let\eqno\relax\let\leqno\relax\let\veqno\relax\hbox{\QED}}
\def\QEDopenhereeqn{\eqno\let\eqno\relax\let\leqno\relax\let\veqno\relax\hbox{\QEDopen}}
\def\QEDhereeqn{\eqno\let\eqno\relax\let\leqno\relax\let\veqno\relax\hbox{\QED}}
\def\QEDopenhereeqn{\eqno\let\eqno\relax\let\leqno\relax\let\veqno\relax\hbox{\QEDopen}}
\title{\LARGE \bf
 Fully distributed Nash equilibrium seeking over time-varying communication networks with linear convergence rate
}
\author{Mattia Bianchi and Sergio Grammatico% <-this % stops a space
\thanks{The authors are with the Delft Center for Systems and Control (DCSC), TU Delft, The Netherlands.
	E-mail addresses: \texttt{\{m.bianchi, s.grammatico\}@tudelft.nl}. This work was partially supported by NWO under research project OMEGA (grant n. 613.001.702) and by the ERC under research project COSMOS (802348).} % <-this % stops a space
}
\begin{document}

\maketitle
\thispagestyle{empty}
\pagestyle{empty}

%%%%%%%%%%%%%%%%%%%%%%%%%%%%%%%%%%%%%%%%%%%%%%%%%%%%%%%%%%%%%%%%%%%%%%%%%%%%%%%%
\begin{abstract}
	We design a distributed algorithm for learning Nash equilibria
	over time-varying communication networks
	in a partial-decision information scenario, where each agent can access its own cost function and local feasible set, but can only observe the actions of some neighbors. 
	Our algorithm is based on projected pseudo-gradient dynamics, augmented with consensual terms. Under strong monotonicity and Lipschitz continuity of the game mapping, we provide a simple proof of linear convergence, based on a contractivity property of the iterates. Compared to similar solutions proposed in literature, we also  allow for time-varying communication and derive tighter bounds on the step sizes that ensure convergence. In fact, in our numerical simulations, our algorithm outperforms the existing gradient-based methods, when the step sizes are set to their theoretical upper bounds. Finally, to relax the assumptions on the network structure, we propose a different pseudo-gradient algorithm, which is guaranteed to converge on time-varying balanced directed graphs. 
\end{abstract}

%%%%%%%%%%%%%%%%%%%%%%%%%%%%%%%%%%%%%%%%%%%%%%%%%%%%%%%%%%%%%%%%%%%%%%%%%%%%%%%%
\section{Introduction}
\gls{NE} problems arise in several network systems, where multiple selfish decision-makers, or agents, aim at optimizing their individual, yet inter-dependent, objective functions. Engineering applications include communication networks \cite{Palomar_Eldar_Facchinei_Pang_2009}, demand-side management in the smart grid \cite{Saad2012}, charging of electric vehicles \cite{Grammatico2017} and demand response in competitive markets \cite{Li_Chen_Dahleh_2015}.
From a game-theoretic perspective, the challenge is to assign the agents behavioral rules that eventually ensure the attainment of a \gls{NE}, a joint action from which no agent has an incentive to unilaterally deviate. 
%Typically the assumption is made that 
\newline\indent
\emph{Literature review:} Typically, \gls{NE} seeking algorithms are designed under the assumption that each agent can access the decisions of all the competitors  \cite{Yu_VanderSchaar_Sayed_2017}, \cite{BelgioiosoGrammatico_ECC_2018}, \cite{Shamma_Arslan_2005}.  
This \emph{full-decision information} hypothesis requires the presence of a coordinator, that broadcast the data to the network, and it is impractical for some applications  \cite{Ghaderi_2014}, \cite{Bimpikis2014}.
One example is the Nash-Cournot  competition model described in \cite{Koshal_Nedic_Shanbag_2016}, where the  profit of each of a group of firms depends not only on its own production, but also on the whole amount of sales, a quantity not directly accessible by any of the firms.
Therefore, in recent years, there has been an increased attention  for fully distributed algorithms that allow to compute \glspl{NE} relying on local information  only.
% in Nash games where the agents rely exclusively on local information. 
%\red{One solution is offered by 
%extremum-seeking and 
%pay-off based schemes \cite{Stankovic_Johansson_Stipanovic_2012}, \cite{FrihaufKrsticBasar_2012}, where  the agents are not required to communicate between each other, but shall be able to  measure their own cost functions.
%%%Instead, 
%Instead, in this paper, we are interested in a different, model-based approach.} 
In this paper, we consider the so-called \emph{partial-decision information} scenario, where the agents
engage in nonstrategic information exchange 
% agree on sharing their strategies 
with some neighbors on a network;
based on the data received, they can estimate and
eventually reconstruct the actions of all the competitors.
%s endowed with a (possibly sparse) preassigned communication graph
This setup has only been introduced very recently. In particular, most of the results available resort to (projected) gradient and consensus dynamics, both in continuous time \cite{YeHu2017}, \cite{GadjovPavel2018}, and discrete time. 
For the discrete time case,  fixed-step algorithms were proposed in \cite{SalehisadaghianiWeiPavel2019}, \cite{TatarenkoShiNedic_CDC2018}, \cite{Pavel2018} (the latter for generalized games), all exploiting a certain 
restricted
monotonicity property. Alternatively, the authors of \cite{TatarenkoNedic2019_unconstrained} developed a gradient-play scheme  by leveraging contractivity properties of doubly stochastic matrices. Nevertheless, in all these approaches theoretical guarantees are  provided only for step sizes that are typically very small, affecting the speed of convergence. 
Furthermore, all the methods cited are designed for a time-invariant, undirected network. To the best of our knowledge, switching communication topologies have only been addressed with diminishing step sizes.
For instance, the early work  \cite{Koshal_Nedic_Shanbag_2016}  considered aggregative games over time-varying jointly connected undirected graphs. This result was extended by the authors of \cite{BelgioiosoNedicGrammatico2020} to games with coupling constraints. In \cite{SalehisadaghianiPavel2017_nondoublystochastic}, an asynchronous gossip algorithm was presented to seek a \gls{NE} over directed graphs. The  
%main
drawback is that  vanishing steps  typically result in slow convergence.

\emph{Contribution:} Motivated by the above, in this paper we present the first fixed-step \gls{NE} seeking algorithms for strongly monotone games  over time-varying communication networks. Our novel contributions are summarized as follows:
\begin{itemize}[topsep=-1pt,leftmargin=*]
	\item We propose a fully distributed projected gradient-play method, that is guaranteed to converge with linear rate when the network adjacency matrix is doubly stochastic. With respect to \cite{TatarenkoNedic2019_unconstrained},  we consider a time-varying communication network and we allow for constrained action sets. 
	Moreover, differently from the  state of the art, 
	%		\red{this  allows us} 
	we provide an upper bound on the step size that does not vanish as the number of agents increases  (§\ref{sec:distributedGNE});  
	\item We show via numerical simulations that, even in the case of fixed networks, our algorithm outperforms the existing pseudo-gradient based dynamics, when the step sizes are set to their theoretical upper bounds  (§\ref{sec:numerics});
	\item We prove that linear convergence to a \gls{NE} on time-varying weight-balanced directed graphs can be achieved via a forward-backward algorithm \cite[§12.7.2]{FacchineiPang2007}, which has  been studied in \cite{Pavel2018}, \cite{TatarenkoShiNedic_CDC2018}, but only for the special case of fixed undirected networks (§\ref{sec:balanced}).
\end{itemize}
\smallskip

\emph{Basic notation}: $\mathbb{N}$ is the set of natural numbers, including $0$. 
$\R$
% ($\R_{\geq 0}$)
is the set of
%  (nonnegative) 
real numbers.
%and $\overline{\R} := \R \cup \{\infty\}$ the set of extended real numbers. $\bs{0}$ ($\bs{1}$) denotes a matrix/vector with all elements equal to $0$ ($1$); to improve clarity, we may add the dimension of these matrices/vectors as subscript. $A \otimes B$ denotes the Kronecker product between the matrices $A$ and $B$. 
$\0_n$ ($\1_n$)  denotes the vector of dimension $n$ with all elements equal to $0$ ($1$); $I_n$  the identity matrix of dimension $n$; the subscripts might be omitted when there is no ambiguity. For a matrix $A \in \R^{m \times n}$, $A^\top$ denotes its transpose; $[A]_{i,j}$ is the element on row $i$ and column $j$, and the second subscript is omitted if $n=1$;
%Given $A \succ 0$, $\langle x \mid y \rangle _A=x^\top A y$ denotes the $A$-induced inner product of the vectors $x$ and $y$,
%$\|{x}\|_{A}=\sqrt{x^\top A x}$ denotes the the $A$-induced norm of the vector $x$; we omit the subscript if $A=I$.
$\sigma_{\textnormal{min}}(A)=\sigma_1(A)\leq\dots\leq\sigma_n(A)=:\sigma_{\textnormal{max}}(A)=\|A\|$ denote  its singular values.  If $A\in\R^{n\times n}$, $\det(A)$ is its determinant;
$A \succ 0$  stands for symmetric positive definite matrix; if $A$ is symmetric,   $\uplambda_{\textnormal{min}}(A)=\uplambda_1(A)\leq\dots\leq\uplambda_n(A)=:\uplambda_{\textnormal{max}}(A)$ denote its eigenvalues.
% $A \succ 0$ ($\succeq 0$) stands for positive definite (semidefinite) matrix. Given $A \succ 0$, $\norm{\cdot}_A $ denoted the the $A$-induced norm, such that $\norm{x}_{A}=x^\top A x$. $\left\| A \right\|$ denotes the largest singular value of $A$. 
%Given $N$ vectors $x_1, \ldots, x_N$, ${x} := \col\left(x_1,\ldots,x_N\right) = [ x_1^\top \ldots  x_N^\top ]^\top$, and for each $i=1,\dots,N$, ${x}_{-i} := \col\left(x_1,\ldots,x_{i-1},x_{i+1},\dots,x_N\right)$. Given the sets $\{S_i\}_{i\in\mc{I}}$, $\mc I=\{1,\dots,N\}$ we define  $\textstyle S:=\prod_{i\in \mc I}S_i:=S_1\times\dots\times S_N$.
$\otimes$ denotes the Kronecker product.
$\diag(A_1,\dots,A_N)$ denotes the block diagonal matrix with $A_1,\dots,A_N$ on its diagonal. Given $N$ vectors $x_1, \ldots, x_N$,  $x:=\col\left(x_1,\ldots,x_N\right) = [ x_1^\top \ldots  x_N^\top ]^\top$ and $x_{-i}=\col(x_1,\dots, x_{i-1},x_{i+1},\dots, x_N)$. $\| \cdot \|$ denotes the Euclidean vector norm.
For a differentiable function $g:\R^n \rightarrow \R$, $\nabla_{\!\!x} g(x)$ denotes its gradient. A mapping $\mc{A}:\R^m\rightarrow \R^n$ is $\ell$-Lipschitz continuous if, for any $x,y\in\R^m$, $\|\mc{A}(x)-\mc{A}(y)\|\leq \ell \|x-y\|$. $\proj _S:\R^n\rightarrow S $ denotes the Euclidean projection onto a closed convex set $S$. An operator $\mathcal{F} : \R^n \rightarrow \R^n$ is
($\mu$-strongly) monotone if, for any $x,y\in\R^n$, $(\mc{F}(x)-\mc{F}(y))^\top(x-y) \geq 0 \, (\geq \mu \|x-y\|^2 )$. The variational inequality VI$(\mc{F},S)$ is the problem of finding a vector $x^*\in {S}$ such that $\mc{F}(x^*)^\top(x-x^*)\geq 0$, for all $x\in S$.

\section{Mathematical setup}\label{sec:mathbackground}
We consider a set of  agents $ \mc I:=\{ 1,\ldots,N \}$, where each agent $i\in \mc{I}$ shall choose its action (i.e., decision variable) $x_i$  from its local  decision set $\textstyle \Omega_i \subseteq \R^{n_i}$. Let $x = \col( (x_i)_{i \in \mc I})  \in \Omega $ denote the stacked vector of all the agents' decisions, $\textstyle \Omega = \Omega_1\times\dots\times\Omega_N\subseteq \R^n$ the overall action space and $\textstyle n:=\sum_{i=1}^N n_i$. 
The goal of each agent $i \in \mc I$ is to minimize its objective function $J_i(x_i,x_{-i})$, which depends on both the local variable $x_i$ and the decision variables of the other agents $x_{-i}= \col( (x_j)_{j\in \mc I\backslash \{ i \} } )$.
The game is then  represented by the inter-dependent optimization problems:
\begin{equation} \label{eq:game}
	\forall i \in \mc{I}:
	\quad \underset{y_i \in \Omega_i}{\argmin}  \; J_i(y_i,x_{-i}).
\end{equation}
%From a game-theoretic perspective,
The technical problem we consider in this paper is the computation of a \gls{NE}, as defined next.

\begin{definition}
	A Nash equilibrium is a set of strategies $x^{*}=\operatorname{col}\left((x_{i}^{*})_{i \in \mathcal{N}}\right)\in \Omega$ such that, for all $ i \in \mathcal{I}$: 
%	\vspace{-0.3em}
	\[
	J_{i}\left(x_i^*, x_{-i}^{*}\right)\leq \inf \{J_{i}\left(y_{i}, x_{-i}^{*}\right) \mid y_i\in\Omega_i \}. \QEDopenhereeqn
	\]
\end{definition}
\smallskip

The following regularity assumptions are common for \gls{NE} problems, see, e.g., \cite[Ass.~1]{Pavel2018}, \cite[Ass.~1]{TatarenkoShiNedic_CDC2018}. 

\begin{standing}[Regularity and convexity]\label{Ass:Convexity}
	For each $i\in \mathcal{I}$, the set $\Omega_i$ is non-empty, closed and convex; $J_{i}$ is continuous  and the function $J_{i}\left(\cdot, x_{-i}\right)$ is convex and continuously differentiable for every $x_{-i}$.
	{\hfill \QEDopen} \end{standing}

%Under Assumption~\ref{Ass:Convexity} a \gls{GNE} exists. 
%I cannot find it. But I am so sure that compactness is not needed... 
%Under Assumption~\ref{Ass:Convexity}, a strategy profile $x^*=\operatorname{col}(\{x^*_i\}_{i\in\mathcal I})\in \R^n$ is a \gls{GNE} if and only if, for some Lagrangian multipliers $\{\lambda_i \in \R^m \}_{i\in \mc{I}}$, the following KKT conditions are satisfied,  for each $i\in\mathcal I$ \cite[Th.~$8$]{FacchineiKanzow2010}:
%$$
%\begin{aligned}
%\begin{array}{l}{\0_{n_{i}} \in \nabla_{x_{i}} J_{i}\left(x_{i}^{*}, x_{-i}^{*}\right)+A_{i}^\top \lambda_{i}^{*}+\mathrm{N}_{\Omega_{i}}\left(x_{i}^{*}\right)}, \\
% {\0_{m} \in-\left(A x^{*}-b\right)+\mathrm{N}_{\R_{+}^{m}}\left(\lambda_{i}^{*}\right).}\end{array}
%\end{aligned}
%$$
%Specifically, we are interested in \gls{GNE} with the same dual variable for all the agents, called \gls{v-GNE}.
%$$
%\text{find $x^* \in \mc{X} $ s.t. } F(x^*)^\top(x-x^*)\geq 0, \ \forall x \in \mc{X},
%$$
%where $F$ is the \emph{pseudo-gradient} of the game, defined by
%\begin{align}
%\label{eq:pseudo-gradient}
%F(x):=\operatorname{col}\left( \{\nabla _{x_i} J_i(x_i,x_{-i})\}_{i\in\mathcal{I}}\right).
%\end{align}
Under Standing Assumption~\ref{Ass:Convexity}, a joint action $x^*$ is a \gls{NE} of the game in \eqref{eq:game} if and only 
if it solves the variational inequality VI$(F,\Omega)$ \cite[Prop.~1.4.2]{FacchineiPang2007}, or, equivalently, if and only if, for 
any $\alpha>0$  \cite[Prop.~1.5.8]{FacchineiPang2007},
% it is a solution of the variational inequality\footnote{Given a set $S\subseteq \R^m$ and a mapping $\psi:S\rightarrow \R^m$, the variational inequality VI$(\psi,S)$ is the problem of finding a vector $\omega^*\in S$ such that $\psi(\omega^*)^\top(\omega-\omega^*)\geq 0$, for all $\omega \in S$.} 
%VI$(F,\Omega)$
\begin{equation} \label{eq:NEinclusion}
	x^*=\proj_{\Omega} (x^*-\alpha F(x^*)),
\end{equation}
where $F$ is the \emph{pseudo-gradient} mapping of the game: 
\begin{equation}
	\label{eq:pseudo-gradient}
	F(x):=\operatorname{col}\left( (\nabla _{\!\!  x_i} J_i(x_i,x_{-i}))_{i\in\mathcal{I}}\right).
\end{equation}
%\begin{equation*}
%x^* \in \Omega  \text{ s.t. } F(x^*)^\top(x-x^*)\geq 0, \ \forall x \in \Omega,
%\end{equation*}
%change this part is very ugly
Next, we postulate a sufficient condition for  the existence of a unique \gls{NE},
namely the  strong monotonicity of the pseudo-gradient \cite[Th. 2.3.3]{FacchineiPang2007}. This assumption is always used for (G)\gls{NE} seeking under partial-decision information with fixed step  sizes, e.g., in \cite[Ass.~2]{TatarenkoShiNedic_CDC2018}, \cite[Ass.~3]{Pavel2018}.
% (while it is sometimes replaced by strict monotonicity and compactness of $\mc{X}$ when allowing for vanishing step sizes \cite[Ass.~2]{Koshal_Nedic_Shanbag_2016}). 
It implies strong convexity of the functions $J_i(\cdot,x_{-i})$ for  every $x_{-i}$,  but not necessarily (strong) convexity of $J_i$ in the full argument.

\begin{standing}\label{Ass:StrMon}
	The pseudo-gradient mapping in \eqref{eq:pseudo-gradient}  is $\mu$-strongly monotone and $\ell_0$-Lipschitz continuous, for some $\mu$, $\ell_{0}>0$.
	\hfill \QEDopen
\end{standing}

In our setup, each agent $i$ can only access its own cost function $J_i$ and feasible set $\Omega_i$.
Moreover, agent $i$ does not have full knowledge of $x_{-i}$, and only relies on the information exchanged locally with neighbors over a time-varying directed communication network $\mathcal G_k(\mc{I},\mc{E}_k)$.  The ordered pair $(i,j) $ belongs to the set of edges, $\mc{E}_k$, if and only if agent $i$ can receive information from agent $j$ at time $k$. 
Let $W_{k}\in \R^{N\times N}$ denote the weighted adjacency matrix of $\mc{G}_k$, and $w^k_{i,j}:=[W_k]_{i,j}$, with $w^k_{i,j}>0$ if $(i,j)\in \mc{E}_k$, $w^k_{i,j}=0$ otherwise; $D_k=\diag((d_i^k)_{i\in\mc{I}})$ and $L_k=D_k-W_k$ the in-degree  and Laplacian matrices of $\mc{G}^k$, with $d_i^k=\textstyle \sum _{j=1}^N w_{i,j}^k$; 
%$L_k=D_k-W_k$ the weighted Laplacian matrix of $\mc{G}_k$, where $D\in\R^N$ is the degree matrix of $\mc{G}_k$, i.e., $D_k=\diag((d_i^k)_{i\in\mc{I}})$ and  $d^k_{i}=\textstyle \sum_{j=1} ^N w^k_{ij}$, for all $i\in \mc{I}$;
$\mc{N}_i^k=\{j\mid (i,j)\in \mc{E}_k\}$ the set of in-neighbors of agent $i$. 
%there are self loops for each node, i.e.

\begin{standing}\label{Ass:connectedgraph}
	For each $\k$, the graph $\mc{G}_k$ is strongly connected.  \hfill \QEDopen
\end{standing}

\begin{assumption}\label{Ass:doublestochasticity}
	%[Undirected Connected Graph]
	\label{Ass:Graph2}
	For all $\k$, the following hold:
	\begin{itemize}[topsep=0pt]
		%		\item[(i)] For all $i,j \in \mc{I}$, $i\neq j$, $w^k_{ij}>0$ if $(j,i)\in \mc{E}_k$, $w^k_{ij}=0$ otherwise; 
		\item[(i)] \emph{Self-loops:} $w_{i,i}^k>0$ for all $i\in \mc{I}$;
		\item[(ii)] \emph{Double stochasticity:} $W_k\1_N=\1_N$, $\1_N^\top W_k=\1_N^\top$. 
		{\hfill \QEDopen}
	\end{itemize} 
\end{assumption}

\begin{remark}
	Assumption~\ref{Ass:Graph2}(i) is intended  just  to ease the notation. Instead,
	Assumption~\ref{Ass:Graph2}(ii) is stronger. It is typically used for networked problems on undirected symmetric graphs, e.g., in \cite[Ass.~6]{Koshal_Nedic_Shanbag_2016}, \cite[Ass.~3]{BelgioiosoNedicGrammatico2020}, \cite[Ass.~3]{TatarenkoNedic2019_unconstrained}, justified by the fact that it can be satisfied by assigning the following Metropolis weights to the communication:
	\[
	\tilde{w}_{i,j}^k=
	\left\{ 
	\begin{aligned}
	w_{i,j}^k/(\max\{d_i^k,d_j^k\}+1)  &&& \text{if} j\in\mc{N}_i \backslash \{i\}; \\
	0 &&& \text{if} j\notin\mc{N}_i ;\\
	1-\textstyle \sum_{j\in \mc{N}_i \backslash \{i\}} \tilde{w}_{i,j}^k &&&  \text{if} i=j.
	\end{aligned}  
	\right.
	\]
	In practice, to satisfy
	Assumption~\ref{Ass:Graph2}(ii) in case of symmetric communication, even under time-varying topology, it suffices for the agents to exchange their in-degree with their neighbors at every time step. Therefore, Standing Assumption~\ref{Ass:connectedgraph} and Assumption~\ref{Ass:Graph2} are easily fulfilled for undirected graphs connected at each step. For directed graphs, given any strongly connected topology, weights can be assigned such that the resulting adjacency matrix (with self-loops) is doubly stochastic, via an iterative distributed process \cite{GharesifardCortes_Doubly_2012}. However, this can be impractical if the network is time-varying. 
	\hfill \QEDopen
\end{remark}

Under Assumption~\ref{Ass:Graph2}, it holds that $\sigma_{N-1}(W_k)<1$, for all $k$, 
where $\sigma_{N-1}(W_k)$ denotes the second largest singular value 
of $W_k$. Moreover, for any $y\in \R^N$,
\begin{equation}\label{eq:Wcontractivity}
	\|W_k(y-\1_N\bar{y})\|\leq \sigma_{N-1}(W_k)\|y-\1_N \bar{y}\|,
\end{equation}
where $\bar{y}=\frac{1}{N}\1_N^\top y$ is the average of $y$. We will further assume
that $\sigma_{N-1}(W_k)$ is bounded away from 1; this automatically
holds if the networks $\mc{G}_k$ are chosen among a finite family.

\begin{assumption}\label{Ass:sigmabar}
	There exists $\bar{\sigma}\in (0,1)$ such that $\sigma_{N-1}(W_k)\leq {\bar{\sigma}}$, for all $k\in\N$.  \hfill \QEDopen
\end{assumption}

\section{Distributed  Nash equilibrium seeking}\label{sec:distributedGNE}
In this section, we present a pseudo-gradient algorithm 
to seek a \gls{NE} of the game \eqref{eq:game} in a fully distributed 
way. 
To cope with partial-decision information, each agent keeps an estimate of all other agents' actions. Let $\x_{i}=\operatorname{col}((\x_{i,j})_{j\in \mc{I}})\in \R^{Nn}$,  where $\x_{i,i}:=x_i$ and $\x_{i,j}$ is agent $i$'s estimate of agent $j$'s action, for all $j\neq i$; 
also, $\x_{j,-i}=\col((\x_{j,l})_{l\in\mc{I}\backslash \{ i \}})$. The agents aim at asymptotically reconstructing the true value of the opponents' actions, based on the data received from their neighbors. The procedure is summarized in Algorithm~\ref{algo:1}. Each agent updates its estimates according to consensus dynamics, then  its action via a gradient step. We remark that each agent computes the partial gradient of its cost in its local estimates $\bs x_i$, not on the actual joint action $x$.

To write the algorithm in compact form, let  $\x=\col((\x_i)_{i\in\mc{I}})$;  as in \cite[Eq.~13-14]{Pavel2018}, let,  for all $i \in \mc{I}$,
\begin{align}
	\mathcal{R}_{i}:=&\left[ \begin{array}{lll}{{\0}_{n_{i} \times n_{<i}}} & {I_{n_{i}}} & {\0_{n_{i} \times n_{>i}}}\end{array}\right] \in \R^{n_i\times n}, 
\end{align}
where
$n_{<i}:=\sum_{j=1}^{i-1}n_j$,
%$\sum_{j<i,j \in \mathcal{I}} n_{j}$,
$n_{>i}:=\sum_{j=i+1}^{N}n_{j}$; 
% $n_{>i}:=\sum_{j=i+1}^N$;  
let also $\mathcal{R}:=\operatorname{diag}\left((\mathcal{R}_{i})_{i \in \mathcal{I}}\right)\in\R^{n\times Nn}$.  In simple terms, $\mathcal R _i$ selects the $i$-th $n_i$ dimensional component from an $n$-dimensional vector. Thus, $\mathcal{R}_{i} \x_{i}=\x_{i,i}=x_i$,
and $x=\mathcal{R} \x$.
%\begin{subequations}
%	\label{eq:RSproperties}
%	\begin{gather}
%	\label{eq:RSproperties:a}
%	\mathcal{R}^\top \mathcal{R}+\mathcal{S}^\top \mathcal{S}=I_{N n} 
%	\\
%	\label{eq:RSproperties:b}
%	{\mathcal{R S}^\top=\0_{n}, \quad  \mathcal{S R}^\top=\0_{N n-n}}
%	\\ 
%	\label{eq:RSproperties:c}
%	{\mathcal{R} \mathcal{R}^\top=I_{n}, \quad \mathcal{S} \mathcal{S}^\top=I_{N n-n}} 
%	\\
%	\label{eq:RSproperties:d}
%	\x=\mathcal{R}^\top x+\mathcal{S}^\top \x.
%	\end{gather}
%\end{subequations}
We define the  \textit{extended pseudo-gradient} mapping  $\bs{F}$ as
\begin{align}
	\label{eq:extended_pseudo-gradient}
	\bs{F}(\x):=\operatorname{col}\left((\nabla_{\!\!x_{i}} J_{i}\left(x_{i}, \x_{i,-i}\right))_{i \in \mathcal{I}}\right). 
\end{align}
Therefore, Algorithm~\ref{algo:1} reads in compact form as:
\begin{equation}\label{eq:algcompact}
	\x^{k+1}=\proj_{\bs{\Omega}}(\W_{\!\!k}\x^k-\alpha\mc{R}^\top\bs{F}(\W_{\!\!k}\x^k)),
\end{equation} 
where $\bs{\Omega}:=\{\x\in \R^{Nn} \mid \mc{R}\x\in\Omega \}$ and $\bs{W}_{\!\!k}:=W_k \otimes I_n$. 
%\section{Convergence analysis}\label{sec:coonvergenceanalysis}

\begin{lemma}[{\cite[Lemma 3]{Bianchi_ECC20_ctGNE}}]\label{lem:LipschitzExtPseudo}
	The  mapping $\bs{F}$ in \eqref{eq:extended_pseudo-gradient} is $\ell$-Lipschitz continuous, for some $\mu\leq\ell\leq \ell_0$. 
	{\hfill \QEDopen} \end{lemma}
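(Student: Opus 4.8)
The plan is to show that the least Lipschitz modulus $\ell$ of $\bs{F}$ lies in $[\mu,\ell_0]$; the bound $\ell\le\ell_0$ carries the content, while $\ell\ge\mu$ is a short observation (and the interval is in any case nonempty, since $F$ being simultaneously $\mu$-strongly monotone and $\ell_0$-Lipschitz forces $\mu\le\ell_0$). Everything rests on one structural fact: $\bs F$ acts \emph{blockwise} in the agents' estimate vectors. Indeed, introducing the block map $F_i\colon\R^n\to\R^{n_i}$, $F_i(y):=\nabla_{x_i}J_i(y_i,y_{-i})$, definition \eqref{eq:pseudo-gradient} says $[F(x)]_i=F_i(x)$, so $F=\col((F_i)_{i\in\mc I})$; and definition \eqref{eq:extended_pseudo-gradient} together with $\x_{i,i}=x_i$ says $[\bs F(\x)]_i=F_i(\x_i)$, so $\bs F$ is the ``block-diagonal'' map $\x\mapsto\col((F_i(\x_i))_{i\in\mc I})$.

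For the upper bound I would first note that each $F_i$ inherits $\ell_0$-Lipschitz continuity from $F$, because a single coordinate block of a vector has norm at most that of the whole vector: $\|F_i(x)-F_i(y)\|\le\|F(x)-F(y)\|\le\ell_0\|x-y\|$ by Standing Assumption~\ref{Ass:StrMon}. Then, using the blockwise form of $\bs F$ and the identity $\|\col((a_i)_{i\in\mc I})\|^2=\sum_{i\in\mc I}\|a_i\|^2$,
\[
\|\bs F(\x)-\bs F(\x')\|^2=\sum_{i\in\mc I}\|F_i(\x_i)-F_i(\x'_i)\|^2\le\ell_0^2\sum_{i\in\mc I}\|\x_i-\x'_i\|^2=\ell_0^2\|\x-\x'\|^2
\]
for all $\x,\x'\in\R^{Nn}$, hence $\ell\le\ell_0$.

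For the lower bound I would perturb a single agent. Fix agent $1$, a point $\x\in\R^{Nn}$, and let $\x'$ agree with $\x$ except that agent $1$'s self-estimate $\x_{1,1}=x_1$ is moved to $x_1+v$, for arbitrary $v\in\R^{n_1}$; write $\bar v\in\R^n$ for the vector whose first block equals $v$ and whose other blocks vanish, so $\x'_1=\x_1+\bar v$ and $\x'_j=\x_j$ for $j\neq 1$. Since only $\x_1$ changes we have $\|\x-\x'\|=\|v\|$, and by the blockwise action of $\bs F$ the difference $\bs F(\x)-\bs F(\x')$ is supported on block $1$ alone, so $\|\bs F(\x)-\bs F(\x')\|=\|F_1(\x_1+\bar v)-F_1(\x_1)\|$. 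Applying $\mu$-strong monotonicity of $F$ to the pair $\x_1,\x_1+\bar v\in\R^n$ (whose increment $\bar v$ is supported on block $1$) gives $(F_1(\x_1+\bar v)-F_1(\x_1))^\top v\ge\mu\|v\|^2$, and Cauchy--Schwarz then yields $\|F_1(\x_1+\bar v)-F_1(\x_1)\|\ge\mu\|v\|=\mu\|\x-\x'\|$; taking the supremum over all such pairs gives $\ell\ge\mu$. I do not expect a genuine obstacle here: the only point needing care is the index bookkeeping — keeping straight that each block of $\bs F$ depends on exactly one estimate vector $\x_i$ — and this is exactly what legitimizes both the blockwise summation in the upper bound and the single-block perturbation in the lower bound; everything else is routine.
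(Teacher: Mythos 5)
Your proof is correct and follows essentially the same route as the paper's (which is deferred to the cited reference \cite[Lemma 3]{Bianchi_ECC20_ctGNE}): the upper bound $\ell\le\ell_0$ comes from the blockwise structure $[\bs F(\x)]_i=F_i(\x_i)$ together with $\|F_i(x)-F_i(y)\|\le\|F(x)-F(y)\|\le\ell_0\|x-y\|$ and summing squares over blocks, and the lower bound $\ell\ge\mu$ from a single-block perturbation combined with strong monotonicity and Cauchy--Schwarz. No gaps.
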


\begin{theorem}\label{th:main1}
	Let Assumptions ~\ref{Ass:doublestochasticity}-\ref{Ass:sigmabar} hold and let 
	\begin{equation}\label{eq:M}
		M_{\alpha}  =  \left[
		\begin{matrix}
			1-\frac{2\alpha \mu}{N}+\frac{\alpha^2 \ell_0^2}{N} & & \left(\frac{\alpha(\ell+\ell_0)+\alpha^2\ell_0\ell}{\sqrt N}\right){\bar{\sigma}} \\
			\left(\frac{\alpha(\ell+\ell_0)+\alpha^2\ell_0\ell}{\sqrt N}\right){\bar{\sigma}} & & \left( 1+2\alpha\ell+\alpha^2\ell^2 \right) {\bar{\sigma}}^2
		\end{matrix}
		\right] .
	\end{equation}
	If the step size $\alpha>0$ is chosen such that 
	\begin{equation}\label{eq:C1}
		\rho_\alpha:=\uplambda_{\max}(M_{\alpha})=\|M_\alpha\|<1,
	\end{equation}
	then, for any initial condition, the sequence $(\bs{x}^k)_\k$ generated by Algorithm~\ref{algo:1} converges to $\x^*=\1_N\otimes x^*$, where $x^*$ is the 
	%	 unique 
	\gls{NE} of the game in \eqref{eq:game}, with linear rate: for all $\k$, 
	\[
	\| \x^k -\x^*\| \leq \left(\sqrt{\rho_\alpha}\right)^{\; k} \| \x^0-\x^*\|. \QEDopenhereeqn
	\]
\end{theorem}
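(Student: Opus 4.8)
The plan is to decompose the estimation error $\x^k - \x^*$ into a \emph{consensus} component and a \emph{disagreement} component, and show that the pair of their norms contracts according to the matrix $M_\alpha$. Concretely, for any $\x \in \R^{Nn}$ write $\bar\x := \frac{1}{N}(\1_N\1_N^\top \otimes I_n)\x$ for its (blockwise) average across agents, and $\x_\perp := \x - \bar\x$ for the deviation from consensus. Since $\x^* = \1_N \otimes x^*$ is already consensual, $\x^* = \overline{\x^*}$ and $\x^*_\perp = 0$. Define the two scalar quantities $a_k := \|\bar\x^k - \x^*\|$ (equivalently $\sqrt N\,\|\tfrac1N\sum_i \x_i^k - x^*\|$ up to the chosen normalization) and $b_k := \|\x^k_\perp\|$. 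The goal is the coupled inequality
\[
\begin{bmatrix} a_{k+1}^2 \\ b_{k+1}^2 \end{bmatrix} \preceq M_\alpha \begin{bmatrix} a_k^2 \\ b_k^2 \end{bmatrix}
\]
(entrywise, after suitable Cauchy–Schwarz/Young manipulations), from which $a_k^2 + b_k^2 \le \rho_\alpha^k (a_0^2 + b_0^2)$ follows since $M_\alpha \succeq 0$ and $\|M_\alpha\| = \rho_\alpha$; because $\|\x^k - \x^*\|^2 = a_k^2 + b_k^2$ (the average/deviation split is orthogonal), this gives the claimed linear rate $\|\x^k - \x^*\| \le (\sqrt{\rho_\alpha})^k \|\x^0 - \x^*\|$.

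The key steps, in order: (1) Use nonexpansiveness of $\proj_{\bs\Omega}$ together with the fixed-point characterization: note $\x^* = \proj_{\bs\Omega}(\x^* - \alpha \mc R^\top \F(\x^*))$ because $\mc R \x^* = x^*$ solves \eqref{eq:NEinclusion} and the consensus part is fixed by $\bs W_{\!\!k}$; hence $\|\x^{k+1} - \x^*\| \le \|\bs W_{\!\!k}\x^k - \alpha\mc R^\top\F(\bs W_{\!\!k}\x^k) - \x^* + \alpha\mc R^\top\F(\x^*)\|$. (2) Project this error onto the consensus subspace and its orthogonal complement. For the consensus part, use double stochasticity $\1_N^\top W_k = \1_N^\top$ so that averaging annihilates the $\bs W_{\!\!k}$ action on $\x^k_\perp$ and leaves $\bar\x^k$ invariant; the dominant term becomes $\|\bar\x^k - x^* - \tfrac{\alpha}{N}\1_N\otimes(\sum_i \text{(gradient terms)})\|$, and strong monotonicity ($\mu$) plus $\ell_0$-Lipschitz continuity of $F$ on the consensual evaluation produces the $1 - \tfrac{2\alpha\mu}{N} + \tfrac{\alpha^2\ell_0^2}{N}$ coefficient, exactly as in the standard contraction estimate for the projected gradient map of a strongly monotone VI. (3) For the disagreement part, use $\bs W_{\!\!k}\1_N = \1_N$ and the contractivity bound \eqref{eq:Wcontractivity} lifted to $\R^{Nn}$: $\|\bs W_{\!\!k}\x^k_\perp\| \le \sigma_{N-1}(W_k)\|\x^k_\perp\| \le \bar\sigma\, b_k$ by Assumption~\ref{Ass:sigmabar}, which explains the $\bar\sigma^2$ and the $(1 + 2\alpha\ell + \alpha^2\ell^2)$ factors (the latter from $\|(\Id - \alpha\mc R^\top\F(\cdot))\|$-type bounds using Lemma~\ref{lem:LipschitzExtPseudo}). (4) The cross terms — arising because the gradient correction $\mc R^\top\F$ mixes the consensus and disagreement components, and because $\F$ is evaluated at $\bs W_{\!\!k}\x^k$ rather than at a consensual point — produce the off-diagonal entry, where the difference $\F(\bs W_{\!\!k}\x^k) - F(\cdot)$ contributes one $\ell + \ell_0$ factor (difference of extended pseudo-gradient from true pseudo-gradient, controlled by $\ell$ via Lemma~\ref{lem:LipschitzExtPseudo} and by $\ell_0$ via Standing Assumption~\ref{Ass:StrMon}) and the $\alpha^2\ell_0\ell$ term comes from the second-order cross contribution; the $\bar\sigma/\sqrt N$ prefactor reflects that one factor is a disagreement term (hence $\bar\sigma$) and the $1/\sqrt N$ comes from the $\1_N$-normalization of the consensus coordinate.

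I expect the main obstacle to be step (4): carefully tracking the cross terms so that the off-diagonal coefficient of $M_\alpha$ comes out \emph{symmetric} and with exactly the stated $\tfrac{\alpha(\ell+\ell_0)+\alpha^2\ell_0\ell}{\sqrt N}\bar\sigma$ form. This requires (a) isolating the ``estimation error'' $\F(\bs W_{\!\!k}\x^k) - F(\mc R\bs W_{\!\!k}\x^k \otimes \text{consensual extension})$ and bounding it by $\ell$ times a disagreement norm rather than a full state norm — this is precisely where Lemma~\ref{lem:LipschitzExtPseudo} (with the crucial bound $\ell \le \ell_0$) is used, and where one must be scrupulous that $\bs F$ restricted to the consensus subspace coincides with $F$; (b) applying Young's inequality with the right weighting so the $2$'s in $2\alpha\mu$, $2\alpha\ell$ survive with the correct sign and the cross term is not over-inflated; and (c) confirming that after squaring and bounding, the resulting quadratic form is dominated entrywise by $M_\alpha$ acting on $(a_k^2, b_k^2)^\top$, so that the single scalar recursion $a_{k+1}^2 + b_{k+1}^2 \le \rho_\alpha (a_k^2 + b_k^2)$ holds. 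Once the matrix recursion is in place, the final telescoping and the passage to $\|\cdot\|$ via $\rho_\alpha = \|M_\alpha\| = \uplambda_{\max}(M_\alpha)$ under \eqref{eq:C1} are immediate; uniqueness of the NE $x^*$ is guaranteed by Standing Assumption~\ref{Ass:StrMon}.
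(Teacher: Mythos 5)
Your overall strategy---the consensus/disagreement orthogonal split, nonexpansiveness of $\proj_{\bs\Omega}$ at the fixed point $\x^*$, strong monotonicity plus the $\ell_0$-Lipschitz bound on the consensual component, $\bar\sigma$-contractivity of $\W_{\!\!k}$ on the disagreement component, and cross terms producing the off-diagonal entry of $M_\alpha$---is exactly the paper's. The problem is how you formalize the recursion. You aim for the \emph{entrywise} inequality $(a_{k+1}^2,\,b_{k+1}^2)^\top \preceq M_\alpha\,(a_k^2,\,b_k^2)^\top$, but $M_\alpha$ as defined in \eqref{eq:M} does not act as a linear map on squared norms: its off-diagonal entries are the coefficients of the \emph{cross product} $a_k b_k$ in a quadratic form. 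What the term-by-term estimates actually deliver (after $\|\xhb_{\scriptscriptstyle\parallel}-\x^*\|=\|\x^k_{\scriptscriptstyle\parallel}-\x^*\|$ and $\|\xhb_{\!\scriptscriptstyle\perp}\|\le\bar\sigma\|\x^k_{\!\scriptscriptstyle\perp}\|$) is the single scalar bound $\|\x^{k+1}-\x^*\|^2 \le (a_k,\,b_k)\,M_\alpha\,(a_k,\,b_k)^\top \le \uplambda_{\max}(M_\alpha)(a_k^2+b_k^2)=\rho_\alpha\|\x^k-\x^*\|^2$, which is all that is needed; forcing the cross term into entrywise form via Young's inequality would alter the diagonal entries, and you would no longer be working with the stated $M_\alpha$.

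More fundamentally, an entrywise recursion requires bounding $a_{k+1}$ and $b_{k+1}$ \emph{separately}, i.e., controlling the consensus and disagreement components of $\x^{k+1}$ individually. The projection $\proj_{\bs\Omega}$ is block-diagonal across agents and does not respect the consensus decomposition, so there is no clean bound on $\|\x^{k+1}_{\!\scriptscriptstyle\perp}\|$ alone in the constrained case; this is precisely why the paper only ever bounds the full norm $\|\x^{k+1}-\x^*\|$ and decomposes the \emph{pre-projection} argument $\xhb^k=\W_{\!\!k}\x^k$ instead of the new iterate. (Even granting your entrywise recursion, deducing $a_k^2+b_k^2\le\rho_\alpha^k(a_0^2+b_0^2)$ from $\|M_\alpha\|=\rho_\alpha$ conflates the $1$-norm of the vector $(a_k^2,b_k^2)$ with its $2$-norm and costs an extra constant.) The repair is small: target the quadratic-form inequality for $\|\x^{k+1}-\x^*\|^2$ directly and induct on the full error norm; your individual bounds in steps (2)--(4) are the right ingredients for that version.
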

%	  Also, the following conditions are sufficient for \eqref{eq:C1} to hold:
%
%\begin{proof}
%	See Appendix~\ref{app:th:main1}.  \hfill 
%	\end{proof}

\smallskip
\begin{lemma}\label{lem:bounds}
	The condition in \eqref{eq:C1} holds if $\alpha>0$ and 
	\begin{subequations}\label{eq:bounds}
				\newlength{\belowdisplayskipsave} \setlength{\belowdisplayskipsave}{\belowdisplayskip} \setlength{\belowdisplayskip}{0pt} 
		\begin{align}
			\alpha  &< \textstyle  \frac{{\bar{\sigma}}}{3\ell_0} \label{eq:condA} \\
			\alpha & < \textstyle \frac{2\mu }{\ell_0^2} \label{eq:condB} \\
			\nonumber
			0 & <2\mu(1-{\bar{\sigma}}^2)-\alpha ({\bar{\sigma}}^2(2\ell_0\ell
			+\ell^2+4\mu\ell+2\ell_0^2)-\ell_0^2) \\ & \quad
			-\alpha^2 (\ell_0\ell^2+\mu\ell^2+2\ell_0^2\ell)2{\bar{\sigma}}^2 -\alpha^32\ell_0^2\ell^2{\bar{\sigma}}^2. \label{eq:condC} 
		\end{align} 
				\setlength{\belowdisplayskip}{\belowdisplayskipsave} 
	\end{subequations}
	{\hfill $\square$}
\end{lemma}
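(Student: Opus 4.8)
The matrix $M_\alpha$ in \eqref{eq:M} is $2\times 2$, real, symmetric, and has nonnegative entries, so by the Perron--Frobenius theorem its spectral radius is its largest eigenvalue, which for a symmetric matrix coincides with its spectral norm; this is the identity recorded in \eqref{eq:C1}. Consequently $\rho_\alpha<1$ is equivalent to $I_2-M_\alpha\succ 0$, and I would establish the latter via Sylvester's criterion applied to $I_2-M_\alpha$: positive definiteness holds iff the leading principal minor $1-[M_\alpha]_{1,1}$ is positive and $\det(I_2-M_\alpha)>0$ (the other diagonal entry of $I_2-M_\alpha$ is then automatically positive, since a positive determinant forces the two diagonal entries to share a sign).

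The first minor condition is immediate: $1-[M_\alpha]_{1,1}=\frac{1}{N}(2\alpha\mu-\alpha^2\ell_0^2)=\frac{\alpha}{N}(2\mu-\alpha\ell_0^2)$ is positive for $\alpha>0$ precisely when $\alpha<2\mu/\ell_0^2$, i.e.\ \eqref{eq:condB}. Under \eqref{eq:condB} this first minor is positive, so multiplying the inequality $\det(I_2-M_\alpha)=\bigl(1-[M_\alpha]_{1,1}\bigr)\bigl(1-[M_\alpha]_{2,2}\bigr)-[M_\alpha]_{1,2}^2>0$ by $N/\alpha>0$ does not change its sign, and the determinant condition reduces to
\[
(2\mu-\alpha\ell_0^2)\bigl(1-\bar\sigma^2(1+\alpha\ell)^2\bigr)\;>\;\alpha\,\bar\sigma^2\bigl((\ell+\ell_0)+\alpha\ell_0\ell\bigr)^2 .
\]
Note that $N$ has cancelled, which is what eventually makes the step-size bound independent of the number of agents.

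It remains to show that this scalar polynomial inequality is guaranteed by \eqref{eq:condA} together with \eqref{eq:condC}. I would expand both sides, collect the powers of $\alpha$, and estimate the resulting coefficients using $\mu\le\ell\le\ell_0$ (Lemma~\ref{lem:LipschitzExtPseudo} with Standing Assumption~\ref{Ass:StrMon}) and $\bar\sigma\in(0,1)$ (Assumption~\ref{Ass:sigmabar}). The bound \eqref{eq:condA} enters exactly here: it gives $\alpha\ell\le\alpha\ell_0<\bar\sigma/3<1/3$, which lets one dominate the higher-order cross terms produced by $(1+\alpha\ell)^2$ and by $\bigl((\ell+\ell_0)+\alpha\ell_0\ell\bigr)^2$ (for instance controlling $\alpha\ell_0\ell$ against $\ell$), so that the exact degree-four inequality above follows from \eqref{eq:condA} together with the cubic inequality \eqref{eq:condC}. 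Conversely, for $\alpha>0$ the conjunction of \eqref{eq:condA}, \eqref{eq:condB} and \eqref{eq:condC} returns $\det(I_2-M_\alpha)>0$, and combined with $1-[M_\alpha]_{1,1}>0$ this yields $I_2-M_\alpha\succ 0$, i.e.\ \eqref{eq:C1}.

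The main obstacle is this last reduction: there is genuine bookkeeping in expanding the degree-four inequality and in selecting which crude, \eqref{eq:condA}-enabled estimates to apply so that exactly the closed-form conditions \eqref{eq:condA}--\eqref{eq:condC} emerge, and in seeing that \eqref{eq:condB} alone carries the necessary first-minor constraint while \eqref{eq:condA} and \eqref{eq:condC} jointly carry the determinant. The remaining ingredients — the Perron--Frobenius and symmetric-matrix identifications, Sylvester's criterion, and the first-minor computation — are routine.
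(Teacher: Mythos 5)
Your first two steps are correct and essentially match the paper: the identity $\|M_\alpha\|=\uplambda_{\max}(M_\alpha)$ (you obtain it from entrywise nonnegativity plus symmetry via Perron--Frobenius, the paper instead uses \eqref{eq:condA} to get $M_\alpha\succ0$ by diagonal dominance --- both are fine), the reduction of \eqref{eq:C1} to $I_2-M_\alpha\succ0$ via Sylvester's criterion, the computation $1-[M_\alpha]_{1,1}=\tfrac{\alpha}{N}(2\mu-\alpha\ell_0^2)$ yielding \eqref{eq:condB}, and the exact determinant condition $(2\mu-\alpha\ell_0^2)\bigl(1-\bar{\sigma}^2(1+\alpha\ell)^2\bigr)>\alpha\bar{\sigma}^2\bigl((\ell+\ell_0)+\alpha\ell_0\ell\bigr)^2$. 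The genuine gap is the step you defer to ``bookkeeping'': that \eqref{eq:condA} and \eqref{eq:condC} imply this last inequality. No choice of estimates will close it, because the implication is false. Expanding your exact inequality, the cubic terms cancel and it reads
\[
0<2\mu(1-\bar{\sigma}^2)-\alpha\bigl(\ell_0^2+\bar{\sigma}^2(2\ell_0\ell+\ell^2+4\mu\ell)\bigr)-2\alpha^2\bar{\sigma}^2\ell^2(\mu+\ell_0),
\]
whose right-hand side equals the right-hand side of \eqref{eq:condC} \emph{minus} $2\ell_0^2\alpha\bigl(1-\bar{\sigma}^2(1+\alpha\ell)^2\bigr)=2\ell_0^2\alpha\,(1-[M_\alpha]_{2,2})$; this correction is strictly positive whenever $I_2-M_\alpha\succ0$ is at all possible (then $1-[M_\alpha]_{2,2}>0$ is necessary), so \eqref{eq:condC} is strictly weaker than the true determinant condition. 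A concrete counterexample: $\mu=\ell=\ell_0=1$, $\bar{\sigma}=0.9$, $N=2$, $\alpha=0.0565$ satisfies \eqref{eq:condA}--\eqref{eq:condC}, yet $\det(I_2-M_\alpha)\approx-2\cdot10^{-4}<0$ and $\uplambda_{\max}(M_\alpha)\approx1.0013>1$, so \eqref{eq:C1} fails.

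For comparison, the paper's own proof does not attempt your ``dominate the cross terms via \eqref{eq:condA}'' reduction: it uses \eqref{eq:condA} only to establish $M_\alpha\succ0$ and simply asserts that \eqref{eq:condB}--\eqref{eq:condC} \emph{are} Sylvester's criterion for $I_2-M_\alpha$, i.e., that \eqref{eq:condC} is literally $\det(I_2-M_\alpha)>0$. The displayed discrepancy shows this assertion corresponds to evaluating $\tfrac{N}{\alpha}(1-[M_\alpha]_{1,1})$ as $2\mu+\alpha\ell_0^2$ instead of $2\mu-\alpha\ell_0^2$ --- a sign slip --- so the lemma as stated is not recoverable by your route or the paper's. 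The repair is to replace \eqref{eq:condC} by the exact quadratic inequality displayed above (it still has positive constant term $2\mu(1-\bar{\sigma}^2)$, hence holds for all sufficiently small $\alpha>0$, preserving the $N$-independence remark); with that replacement your argument goes through and \eqref{eq:condA} is needed only if one insists, as the paper does, on certifying $M_\alpha\succ0$ rather than invoking nonnegativity of the entries.
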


\begin{proof}
	The condition in  \eqref{eq:condA} implies that   $M_{\alpha}\succ 0$ (by diagonal dominance and positivity of the diagonal elements{\color{black}, }as can be checked   by recalling that $\ell\leq \ell_0$, $\mu\leq \ell_0$, $N\geq 2$, $\sigmabar<1$).
	% \blue{since $\ell_0\geq \ell$}). 
	The  inequalities in \eqref{eq:condB}-\eqref{eq:condC} are the Sylvester's criterion for the matrix $I_2-M_\alpha$: they impose that  $[I_2-M_\alpha]_{1,1}>0$ \eqref{eq:condB} and $\det(I_2-M_\alpha)>0$ \eqref{eq:condC}, hence $I_2-M_{\alpha}\succ 0$.
	%	
	%	
	%	 all the principals minors of $I_2-M_\alpha$ are positive, hence 
	Altogether, this implies $\| M_\alpha \|<1$.
\end{proof}

\newlength{\textfloatsepsave} \setlength{\textfloatsepsave}{\textfloatsep} \setlength{\textfloatsep}{12pt}
\begin{algorithm}[t]\caption{Fully distributed \gls{NE} seeking} \label{algo:1}
	%Initialize: $\x^i_{-i}(0)\in \R^{n-n_i},\lambda_{i}(0)\in \R^m_{\geq0}, z_i(0)\in \R^{m}$
	% \\
	\vspace{0.3em}
	\textbf{Initialization}: for all $i\in \mc{I}$, set $x_i^0\in \Omega_i$, $\bs{x}_{i,-i}^0\in \R^{n-n_i}$. \vspace{0.2em}
	\\ 
	\textbf{Iterate until convergence:} for all $i\in\mc{I}$,
	\begin{itemize}[leftmargin=0.3em]
		\item[] 
		\emph{Distributed averaging: } $\hat{\x}_i^k = \sum_{j=1}^N w_{i,j}^k  \x_{j}^k $\vspace{0.3em}
		\item[]
		\emph{Local variables update:} 
		$
		\begin{aligned}[t]
		x_i^{k+1}&=\proj_{\Omega_i} (\xhb_{i,i}^k-\alpha \nabla_{\!\! x_i} J_i (\hat{\x}^k_i))\\
		\x_{i,-i}^{k+1}&=\hat{\x}^k_{i,-i}.
		\end{aligned}
		$
		\vspace{-0.2em}
	\end{itemize}
\end{algorithm}
%\setlength{\belowdisplayskip}{\belowdisplayskipsave} 

%wn via a limiting argument, by exploiting the continuity of the spectral radius with respect to the element of a matrix. 
%    satisfies the following inequalities:
\noindent 

\begin{remark}  
	The conditions in 	\eqref{eq:bounds} always hold for $\alpha $ small enough,
	%	The step $\alpha $ can always be chosen small enough to satisfy the conditions in  \eqref{eq:bounds}, 
	since, in
	%	. In fact, it suffices to set $\alpha>0$ such thatWe remark that 
	the monomial inequality \eqref{eq:condC},
	%always holds for some  $\alpha>0$ sufficiently small,
	%since 
	the constant term is $2\mu(1-{\bar{\sigma}}^2)>0$. While explicit solutions are known for cubic equations, we prefer the compact representation in \eqref{eq:condC}. The bounds in \eqref{eq:bounds} are not tight, and in practice  better bounds on the step size $\alpha$ are obtained by simply checking the Euclidean norm of the  $2\times 2$ matrix $M_\alpha$ in \eqref{eq:M}. 
	Instead, the  key observation is that the conditions in \eqref{eq:bounds} do not depend on the number of agents:  given the parameters ${\bar{\sigma}}$, $\mu$, $\ell_0$ and $\ell$,  a constant  $\alpha$ that ensures convergence can be chosen independently of  $N$. On the contrary, the rate $\sqrt{\rho_\alpha}$ does depend on $N$ and, in fact, it approaches $1$ as $N$ grows unbounded (analogously to the results in \cite{SalehisadaghianiWeiPavel2019}, \cite{TatarenkoShiNedic_CDC2018}, \cite{TatarenkoNedic2019_unconstrained}).
	\hfill  \QEDopen 
\end{remark}

\begin{remark}
	Compared to \cite[Alg. (7)]{TatarenkoNedic2019_unconstrained} (or \cite[Alg.~	1]{TatarenkoShiNedic_CDC2018}), in Algorithm~\ref{algo:1} the agents \emph{first} exchange information with their neighbors, and \emph{then}  evaluate their gradient term, resulting in better bounds on the step size $\alpha$. Moreover, differently from \cite[Th.~1]{TatarenkoNedic2019_unconstrained}, Theorem 1 provides a contractivity property for the iterates in \eqref{eq:algcompact} that holds at \emph{each} step. This has beneficial consequences in terms of robustness, see Remark~\ref{rem:ISS}. \hfill  \QEDopen
\end{remark}
\subsection{ Technical discussion }
In Algorithm~\ref{algo:1}, the partial gradients $\nabla_{\!\! x_i} J_i$ are evaluated 
on the local estimates $\x_{i,-i}$, not on the actual strategies
$x_{-i}$. Only if the estimates of all the agents coincide with the actual value, i.e., $\x =\1_N\otimes x$, we have that $\F(\x)=F(x)$. As a consequence, the mapping $\mc{R}^\top\F$ is not necessarily monotone, not even under strong monotonicity of the game mapping. Indeed, the loss of monotonicity is the main technical difficulty arising from the partial-decision information setup. Some works  \cite{GadjovPavel2018}, \cite{SalehisadaghianiWeiPavel2019}, \cite{Pavel2018}, \cite{Bianchi_ECC20_ctGNE} deal with this issue by leveraging a restricted strong monotonicity property,  which can be ensured, by opportunely choosing the parameter $\gamma$,  for the augmented mapping $\F_{\! \textnormal{a}} (\x):=\gamma \mc{R}^\top \F(\x) +\bs{L} \x$, where $\bs{L}=L\otimes I_n$ and $L$ is the Laplacian of a fixed undirected connected  network. 
Since the unique solution of the VI$(\Fa,\bs{\Omega})$ is $\x^*=\1_N\otimes x^*$, with $x^*$ the unique \gls{NE} of the game in \eqref{eq:game} \cite[Prop. ~1]{TatarenkoShiNedic_CDC2018}, one can design  \gls{NE} seeking algorithms 
via standard solution methods for variational inequalities (or the corresponding monotone inclusions, \cite{Pavel2018}).
For instance, in
\cite{TatarenkoShiNedic_CDC2018}, a forward-backward algorithm \cite[12.4.2]{FacchineiPang2007} is proposed 
to solve VI$(\Fa,\bs{\Omega})$, resulting in the  algorithm 
\begin{equation}\label{eq:algo:Nediccompact}
	\x^{k+1}=\proj _{\bs{\Omega}}\bigl( \x^k-\tau(\Fa(\x)) \bigr).
\end{equation}
We also recover this iteration when considering \cite[Alg.~1]{Pavel2018}
in the absence of coupling constraints. 
% The drawback is that
However, exploiting the monotonicity of $\Fa$ results in conservative 
% theoretical
upper bounds on the parameters $\tau$ and $\gamma$, and hence in slow convergence (see §\ref{sec:balanced}-\ref{sec:numerics}). 
More recently, the  authors of  \cite{TatarenkoNedic2019_unconstrained} studied the convergence of  \eqref{eq:algo:Nediccompact} based on contractivity of the iterates, in the case of a fixed undirected network with doubly stochastic adjacency matrix $W$, unconstrained action sets (i.e., $\Omega=\R^n$), and by fixing $\tau=1$, which results in 
the algorithm: 
\begin{equation}\label{eq:algo:unconstrainedcompact}
	\x^{k+1}=(W\otimes I_N)\x-\alpha\mc{R}^\top\bs{F}(\x^k).
\end{equation}
%While this algorithm requires  only one parameter to be set, 
Nonetheless, the upper bound on $\alpha$ provided in \cite[Th.~1]{TatarenkoNedic2019_unconstrained} is decreasing 
to zero when the number of agents $N$  grows unbounded 
(in contrast with that in Theorem~\ref{th:main1}, see Lemma~\ref{lem:bounds}).

\section{Balanced directed graphs}\label{sec:balanced}
\noindent 
In this section, we relax Assumption~\ref{Ass:doublestochasticity} to the following.

\begin{assumption}\label{Ass:balanced}
	For all $\k$, the communication graph $\mc{G}_k$ is weight balanced: $(\1_N^\top W_k)^\top=W_k \1_N$. \hfill\QEDopen
\end{assumption}

%For weight-balanced digraphs, weak and strong connectedness coincide. Moreover, 
For weight-balanced digraphs, in-degree and out-degree of each node coincide. Therefore, the matrix  $\tilde{L}_k:=(L_k+L_k^\top)/{2}=D_k-(W_k+W_k^\top)/{2}$  is itself the symmetric Laplacian of an undirected graph. Besides, such a graph is connected by Standing Assumption~\ref{Ass:connectedgraph}; hence $\tilde{L}_k$ has a simple eigenvalue in $0$, and the others are positive, i.e.,  $\uplambda_2(\tilde{L}_k)>0$.

\begin{assumption}\label{Ass:lambdaunderbar}
	There exist $\tilde{\sigma}$, $\bar{\uplambda} >0$ such that $\sigma_\textnormal{max}(L_k)\leq\tilde{\sigma}$ and $\uplambda_{2}(\tilde{L}_k)\geq {\bar{\uplambda}}$, for all $k\in\N$.  \hfill \QEDopen
\end{assumption}

\begin{remark}
	Assumptions~\ref{Ass:sigmabar} and \ref{Ass:lambdaunderbar} always hold if the networks switch among a finite family. 
	Yet, $\bar{\sigma}$, $\tilde{\sigma}$ and $\bar{\uplambda} $ are global parameters, that could be difficult to compute in a distributed way;  upper/lower bounds might be available for special classes of networks, e.g., unweighted graphs.
	\hfill \QEDopen
\end{remark}

\noindent To seek a \gls{NE} over switching balanced digraphs, we propose the iteration in Algorithm~\ref{algo:2}. In compact form, it reads as
\begin{equation}\label{eq:algo2:compact}
	\x^{k+1}=\proj _{\bs{\Omega}}\bigl( \x^k-\tau(\gamma \mc{R}^\top \F(\x^k) +\bs{L}_k\x^k) \bigr)
\end{equation}
where $\bs{L}_k=L_k\otimes I_n$. Clearly, \eqref{eq:algo2:compact} is the same scheme of \eqref{eq:algo:Nediccompact}, just adapted to take  the switching topology into account. In fact,  the proof of convergence of  Algorithm~\ref{algo:2} is based on a restricted strong monotonicity property of the operator
\begin{equation}\label{eq:Fa}
	\F_{\! \textnormal{a}}^k (\x):=\gamma \mc{R}^\top \F(\x) +\bs{L}_k \x,
\end{equation}
that still holds for balanced directed graphs, as we show next.

\begin{theorem}\label{th:main2}
	Let Assumptions~\ref{Ass:balanced}-\ref{Ass:lambdaunderbar} hold, and let 
	%	\begin{align}
	%	\label{eq:M1}
	%	\mc{M}:=\left[ \begin{array}{cc}{\frac{\mu}{N}} & \ {-\frac{\theta_0+\theta}{2\sqrt{N}}} \\ {-\frac{\theta_0+\theta}{2\sqrt{N}}} & \ {c \lambda_{2}(L)-\theta}\end{array}\right], 
	%	\underline{c} =\textstyle \frac{1}{\lambda_2(L)}\left(\frac{(\theta_0+\theta)^{2}}{4\mu}+\theta\right)
	%	\end{align}
	\begin{align}
		\label{eq:M1}
		\begin{aligned}
			{M}& :=\gamma\begin{bmatrix}{\frac{\mu}{N}} & \ {-\frac{\ell_0+\ell}{2\sqrt{N}}} \\ {-\frac{\ell_0+\ell}{2\sqrt{N}}} & \ { \textstyle \frac{ \bar{\uplambda}}{\gamma}-\theta}\end{bmatrix}, \\
			{\mubar}&:=\uplambda_{\textnormal{min}} ({M}),
		\end{aligned}
		\quad \begin{aligned}[c]
			\gammamax&:=\textstyle \frac{4\mu \bar{\uplambda}}{(\ell_0+\ell)^{2}+4\mu\theta},
			%	{\mubar}&:=\uplambda_{\textnormal{min}} ({M}),
			%		\\
			\\
			\bar{\ell}& :=\ell+\tilde{\sigma}
			\\
			\taumax &:= \textstyle{2\mubar}/{\bar{\ell}^2}, 
		\end{aligned}
		\\[0.2em]  
		\nonumber \rho_{\gamma,\tau} :=1-2\tau \mubar+\tau^2\bar{\ell}^2.~~~~~~~~~~~~
	\end{align}
	If  $\gamma\in (0,\gammamax)$, then $M\succ 0$ and, for any $\tau \in (0,\taumax)$, for any initial condition, the sequence $(\x^k)_\k$ generated by
	Algorithm~\ref{algo:2} converges to $\x^*=\1_N\otimes x^*$, where $x^*$ is 
	the unique \gls{NE} of the game in \eqref{eq:game}, with linear rate: for all $\k$, 
	\[
	\| \x^k -\x^*\| \leq \left(\sqrt{\rho_{\gamma,\tau}}\right)^{\; k} \| \x^0-\x^*\|. \QEDopenhereeqn
	\]
\end{theorem}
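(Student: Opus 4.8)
The plan is to run the classical forward--backward contraction argument, with strong monotonicity of the augmented operator $\bs{F}_{\textnormal{a}}^k$ of \eqref{eq:Fa} replaced by a \emph{restricted} strong monotonicity with respect to the consensus solution $\x^* = \1_N \otimes x^*$. The four ingredients are: (a) $\x^* \in \bs{\Omega}$ and $\x^*$ is the unique solution of VI$(\bs{F}_{\textnormal{a}}^k,\bs{\Omega})$ for every $k$, so $\x^* = \proj_{\bs{\Omega}}(\x^* - \tau\bs{F}_{\textnormal{a}}^k(\x^*))$ for all $\tau>0$ and all $k$; (b) a uniform estimate $(\bs{F}_{\textnormal{a}}^k(\x) - \bs{F}_{\textnormal{a}}^k(\x^*))^\top(\x-\x^*) \ge \mubar\|\x-\x^*\|^2$ with $\mubar = \uplambda_{\textnormal{min}}(M) > 0$; (c) a uniform Lipschitz bound $\|\bs{F}_{\textnormal{a}}^k(\x) - \bs{F}_{\textnormal{a}}^k(\y)\| \le \bar{\ell}\|\x-\y\|$; (d) the ``cosine rule'' for the projected step. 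Combining them with nonexpansiveness of $\proj_{\bs{\Omega}}$, and writing $\x^{k+1} = \proj_{\bs{\Omega}}(\x^k - \tau\bs{F}_{\textnormal{a}}^k(\x^k))$, $\x^* = \proj_{\bs{\Omega}}(\x^* - \tau\bs{F}_{\textnormal{a}}^k(\x^*))$, one expands $\|\x^{k+1}-\x^*\|^2 \le \|\x^k-\x^*\|^2 - 2\tau(\bs{F}_{\textnormal{a}}^k(\x^k)-\bs{F}_{\textnormal{a}}^k(\x^*))^\top(\x^k-\x^*) + \tau^2\|\bs{F}_{\textnormal{a}}^k(\x^k)-\bs{F}_{\textnormal{a}}^k(\x^*)\|^2 \le (1-2\tau\mubar+\tau^2\bar{\ell}^2)\|\x^k-\x^*\|^2 = \rho_{\gamma,\tau}\|\x^k-\x^*\|^2$; iterating yields the claimed linear rate, and $\rho_{\gamma,\tau}\in[0,1)$ precisely for $\tau\in(0,2\mubar/\bar{\ell}^2)=(0,\taumax)$, nonnegativity using $\mubar\le\bar{\ell}$ (which follows from (b)--(c) and Cauchy--Schwarz).

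Ingredient (a) is short: $\mc{R}\x^*=x^*$ gives $\x^*\in\bs{\Omega}$; $\bs{L}_k\x^* = (L_k\1_N)\otimes x^* = \0$ since any Laplacian annihilates $\1_N$, so $\bs{F}_{\textnormal{a}}^k(\x^*)=\gamma\mc{R}^\top F(x^*)$, and for $\x\in\bs{\Omega}$ we get $\bs{F}_{\textnormal{a}}^k(\x^*)^\top(\x-\x^*) = \gamma F(x^*)^\top(\mc{R}\x - x^*)\ge 0$ because $\mc{R}\x\in\Omega$ and $x^*$ solves VI$(F,\Omega)$ (equivalently, is the \gls{NE}, by the Standing Assumptions); uniqueness of the VI solution then follows from (b). Ingredient (c) is routine: $\|\bs{F}_{\textnormal{a}}^k(\x)-\bs{F}_{\textnormal{a}}^k(\y)\| \le \gamma\ell\|\x-\y\| + \sigma_{\textnormal{max}}(L_k)\|\x-\y\| \le (\ell+\tilde{\sigma})\|\x-\y\| = \bar{\ell}\|\x-\y\|$, using Lemma~\ref{lem:LipschitzExtPseudo}, $\|\mc{R}^\top\|=1$, and Assumption~\ref{Ass:lambdaunderbar}.

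Ingredient (b) is the crux and the main obstacle. Decompose $\x-\x^* = \1_N\otimes(\bar{\x}-x^*) + \x_\perp$, where $\bar{\x}=\tfrac1N(\1_N^\top\otimes I_n)\x$ and $\x_\perp = \x - \1_N\otimes\bar{\x}$ is orthogonal to the consensus subspace; put $a=\|\bar{\x}-x^*\|$, $b=\|\x_\perp\|$, so $\|\x-\x^*\|^2 = Na^2+b^2$. For the Laplacian part, weight-balancedness (Assumption~\ref{Ass:balanced}) gives $L_k^\top\1_N=\0$, so the cross term $\x_\perp^\top\bs{L}_k(\1_N\otimes(\bar{\x}-x^*))$ vanishes and $\x_\perp^\top\bs{L}_k\x_\perp = \x_\perp^\top(\tilde{L}_k\otimes I_n)\x_\perp \ge \uplambda_2(\tilde{L}_k)\,b^2 \ge \bar{\uplambda}\,b^2$, since $\x_\perp$ is orthogonal to the null space of $\tilde{L}_k\otimes I_n$ and $\tilde{L}_k$ is the Laplacian of a connected undirected graph (Standing Assumption~\ref{Ass:connectedgraph}), invoking Assumption~\ref{Ass:lambdaunderbar}. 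For the pseudo-gradient part, use $\bs{F}(\1_N\otimes v)=F(v)$ and $\mc{R}(\1_N\otimes v)=v$ to rewrite $\gamma(\mc{R}^\top(\bs{F}(\x)-\bs{F}(\x^*)))^\top(\x-\x^*) = \gamma(\bs{F}(\x)-F(x^*))^\top(\mc{R}\x - x^*)$, then split off $F(\bar{\x})=\bs{F}(\1_N\otimes\bar{\x})$ and off $\bar{\x}$: the ``aligned'' piece $(F(\bar{\x})-F(x^*))^\top(\bar{\x}-x^*)\ge\mu a^2$ by Standing Assumption~\ref{Ass:StrMon}, while the remaining pieces are bounded below by $-(\ell_0+\ell)ab-\ell b^2$ using $\ell_0$-Lipschitz continuity of $F$, $\ell$-Lipschitz continuity of $\bs{F}$ (Lemma~\ref{lem:LipschitzExtPseudo}), $\|\x-\1_N\otimes\bar{\x}\|=b$, and $\|\mc{R}\x_\perp\|\le\|\x_\perp\|=b$. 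Collecting, $(\bs{F}_{\textnormal{a}}^k(\x)-\bs{F}_{\textnormal{a}}^k(\x^*))^\top(\x-\x^*) \ge \gamma\mu a^2 - \gamma(\ell_0+\ell)ab + (\bar{\uplambda}-\gamma\theta)b^2$, which is exactly $[\sqrt{N}a,\ b]\,M\,[\sqrt{N}a,\ b]^\top$ with $M$ as in \eqref{eq:M1} (so $\theta=\ell$), hence $\ge\uplambda_{\textnormal{min}}(M)(Na^2+b^2)=\mubar\|\x-\x^*\|^2$ as soon as $M\succeq 0$. Positivity $M\succ 0$ (equivalently $\mubar>0$) holds by Sylvester's criterion: $M_{11}>0$ is clear and $\det M = \tfrac{\gamma}{4N}\big(4\mu\bar{\uplambda}-\gamma((\ell_0+\ell)^2+4\mu\theta)\big)>0$ iff $\gamma<\gammamax$. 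The one point beyond the undirected fixed-network analysis of \cite{TatarenkoShiNedic_CDC2018} is that the constants $\bar{\uplambda}$, $\tilde{\sigma}$ — hence $\mubar$, $\bar{\ell}$, $\rho_{\gamma,\tau}$ — are uniform in $k$ by Assumption~\ref{Ass:lambdaunderbar}, so the per-step contraction compounds over the switching topology.
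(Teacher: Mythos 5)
Your proof is correct and follows essentially the same route as the paper: the fixed-point property of $\x^*$, restricted strong monotonicity of $\Fa^k$ obtained by reducing the weight-balanced Laplacian quadratic form to the symmetrized Laplacian $\tilde{L}_k\otimes I_n$, the uniform Lipschitz bound $\bar{\ell}$, and the standard forward--backward contraction, all uniform in $k$ by Assumption~\ref{Ass:lambdaunderbar}. The only difference is that you re-derive the restricted-monotonicity estimate from the consensus/orthogonal decomposition instead of citing \cite[Lemma 3]{Pavel2018} as the paper does, in passing identifying the otherwise-undefined constant $\theta$ in \eqref{eq:M1} as $\ell$; this is a self-contained elaboration of the same argument rather than a different one.
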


%\begin{proof}
%	See Appendix~\ref{app:th:main2}.
%\end{proof}
\medskip
\begin{remark}
	Differently from  the bound $\alphamax$ in \eqref{eq:M},  $\taumax$  in \eqref{eq:M1} vanishes as $N$ grows (fixed the other parameters), as   $\bar{\mu}$ decreases to  $0$ (by continuity of the eigenvalues).
	\hfill  \QEDopen 
\end{remark}

\begin{remark}\label{rem:ISS}
	Based on Theorems~\ref{th:main1}, \ref{th:main2}, it can be proven that the discrete-time systems \eqref{eq:algcompact},  \eqref{eq:algo2:compact} are \gls{ISS}  with respect to additive disturbances, with \gls{ISS}-Lyapunov function $\|\x-\x^*\|^2$. 
	By Lipschitz continuity of the updates, this  implies \gls{ISS} for noise both on the communication and in the evaluation of the  partial gradients.
	\hfill  \QEDopen 
\end{remark}

\begin{algorithm}[t]\caption{Fully distributed \gls{NE} seeking} \label{algo:2}
	%Initialize: $\x^i_{-i}(0)\in \R^{n-n_i},\lambda_{i}(0)\in \R^m_{\geq0}, z_i(0)\in \R^{m}$
	% \\
	\vspace{0.3em}
	\textbf{Initialization}: for all $i\in \mc{I}$, set $x_i^0\in \Omega_i$, $\bs{x}_{i,-i}^0\in \R^{n-n_i}$. \vspace{0.2em}
	\\ 
	\textbf{Iterate until convergence:} for all $i\in\mc{I}$,
	$$
	\begin{aligned}[t]
	\xhb_i^{k}&=\textstyle \sum^N _{j=1}w_{i,j}^k(\x_i^k-\x_{j}^k)
	\\
	x_i^{k+1}&=\proj_{\Omega_i} \bigl(x_i^k-\tau( \gamma \nabla_{\!\! x_i} J_i ({\x}_i^k)+\xhb_{i,i}^k) \bigr)\\
	\x_{i,-i}^{k+1}&=\hat{\x}^k_{i,-i}.
	\end{aligned}
	$$
	\vspace{-0.6em}
\end{algorithm} 

\section{Numerical example: A Nash-Cournot game}\label{sec:numerics}
We consider the  Nash-Cournot game in \cite[§6]{Pavel2018}. $N$ firms produce a commodity that is sold to $m$ markets. Each firm $i\in\mc{I}=\{1,\dots,N\}$ can only participate in $n_i\leq m$ of the markets; its action $x_i\in\R^{n_i}$ is the vector of quantities  of product to be sent to these $n_i$ markets, bounded by the local constraints $\0_{n_i}\leq x_i\leq X_i$. Let $A_i\in\R^{m\times n_i}$ be the matrix that specifies which markets firm $i$ participates in. Specifically, $[A_i]_{k,j}=1$ 
%the $j$-th column of $A_i$ has its $k$-th element equal to $1$ 
if $[x_i]_j$ is the amount of product sent to the $k$-th market by agent $i$,  $[A_i]_{k,j}=0$ otherwise, for all $k=1,\dots,m$, $j=1,\dots, n_i$. Let $A:=[A_1 \dots A_N]$; then $Ax=\textstyle \sum_{i=1}^N A_ix_i \in \R^m$ are the quantities of total product delivered to each market.  Firm $i$ aims at maximizing its profit, i.e.,  minimizing the cost function
$J_i(x_i,x_{-i})=c_i(x_i)-p(Ax)^\top A_ix_i.$
Here, $c_i(x_i)=x_i ^\top Q_i x_i+q_i^\top x_i$ is firm $i$'s production cost, with $Q_i\in \R^{n_i\times n_i}$, $Q_i\succ 0$, $q_i\in \R^{n_i}$. Instead, $p:\R^m\rightarrow \R^m$ associates to each market a price that depends on the amount of product delivered to that market. Specifically, the price for the market $k$, for $k=1,\dots,m$, is $ [p(Ax)]_k=\bar P_k$ -$\chi_k [Ax]_k$, where $\bar P_k$, $\chi_k>0$. 
We set $N=20$, $m=7$. The market structure is as in \cite[Fig.~1]{Pavel2018}, that defines which firms are allowed to participate in which  markets. Therefore, $x\in\R^n$, with $n=32$.  We select randomly with uniform distribution $r_k$ in $[1,2]$, $Q_i$ diagonal with diagonal elements in $[14,16]$, $q_i$ in $[1,2]$, $\bar{P}_k$ in $[10,20]$, $\chi_k$ in $[1,3]$, $X_i$  in $[5,10]$, for all $i\in \mc{I}$, $k=1,\dots,m$.
The resulting setup satisfies Standing Assumptions \ref{Ass:Convexity}-\ref{Ass:StrMon}  \cite[§VI]{Pavel2018}. 
The firms cannot access the production of all the competitors, but  can communicate with some neighbors on a network.

We first consider the case of a fixed, undirected graph, under Assumption~\ref{Ass:doublestochasticity}. Algorithm~\ref{algo:2} in this  case reduces to  \cite[Alg.~1]{TatarenkoShiNedic_CDC2018}.
% or, in the absence of coupling constraints, to
%\cite[Alg.~1]{Pavel2018}.
We compare  Algorithms~\ref{algo:1}-\ref{algo:2} with the inexact ADMM in \cite{SalehisadaghianiWeiPavel2019} and the accelerated  gradient method in \cite{TatarenkoShiNedic_CDC2018}, for the step sizes that ensure convergence. Specifically, we set $\alpha$ as in Theorem~\ref{th:main1} for Algorithm~\ref{algo:1}. The convergence of all the other Algorithms is based on the monotonicity of $\Fa$ in \eqref{eq:Fa}; hence we set $\gamma$ as in Theorem~\ref{th:main2}.  Instead of using the conservative bounds in \eqref{eq:M1} for the parameters, $\bar{\mu}$ and $\bar{\ell}$, we obtain a better result by computing the values numerically.  $\Fa$ is (non-restricted) strongly monotone for our parameters, hence also the convergence result for \cite[Alg.~2 ]{TatarenkoShiNedic_CDC2018} holds. Figure~\ref{fig:1} shows that Algorithm~\ref{algo:1} outperforms all the other methods (we also note that the accelerated gradient in \cite[Alg.~2]{TatarenkoShiNedic_CDC2018} requires two projections and two communications per iterations). 
As a numerical example, we also compare Algorithm~\ref{algo:1} with the scheme in \eqref{eq:algo:unconstrainedcompact} by removing the local constraints, in Figure~\ref{fig:2}.

\setlength{\textfloatsep}{\textfloatsepsave-1em}
\begin{figure}[t]
	\centering
	\includegraphics[width=1\columnwidth]{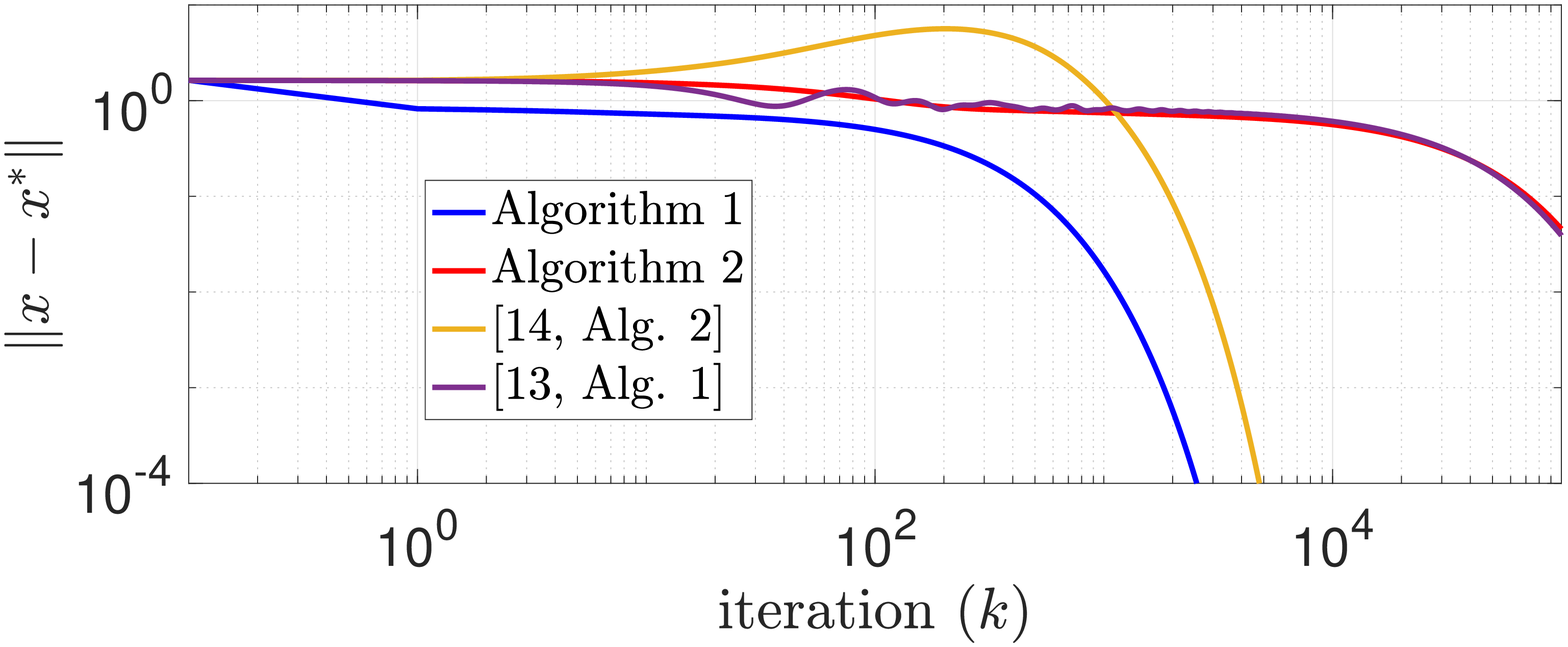}
	\caption{Distance from the \gls{NE} for different pseudo-gradient \gls{NE} seeking methods, with  step sizes that guarantee convergence. }\label{fig:1}
\end{figure}
\begin{figure}[t]
	\centering
	\includegraphics[width=1\columnwidth]{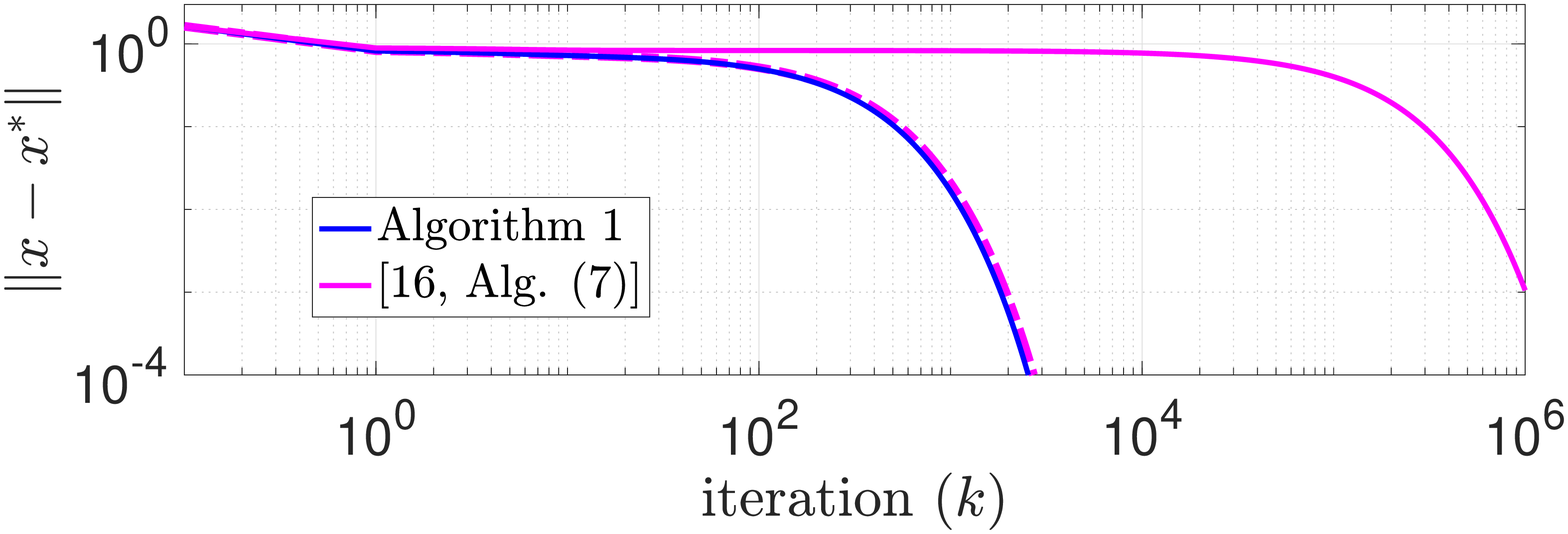}
	\caption{Distance from the NE for Algorithm~\ref{algo:1}, with step size $\alpha=2*10^{-3}$ (upper bound in Theorem~\ref{th:main1}), and the method in  \cite[Alg.~1]{TatarenkoNedic2019_unconstrained}\label{fig:2}, with step size $\alpha=4*10^{-6}$ (upper bound in \cite[Th.~1]{TatarenkoNedic2019_unconstrained}). Algorithm~\ref{algo:1} converges much faster, thanks to the  larger step size. The scheme in \cite[Alg.~1]{TatarenkoNedic2019_unconstrained} still converges if we set $\alpha=2*10^{-3}$ (dashed line, not supported theoretically).}
\end{figure}
\begin{figure}[t]
	\centering
	\includegraphics[width=1\columnwidth]{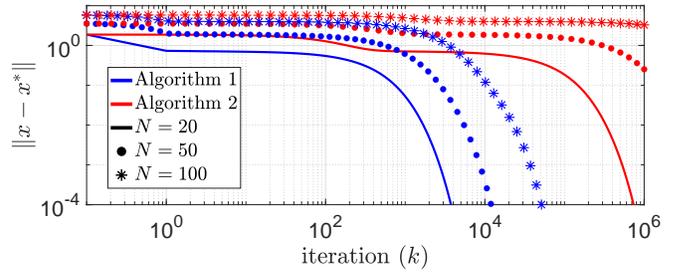}
	\caption{Comparison of Algorithms~1 and \ref{algo:2}, on a time-varying graph, for $20$, $50$ or $100$ agents, with the step sizes set to their theoretical upper bounds.} \label{fig:3}
\end{figure}
\begin{figure}[t]
	\centering
	\includegraphics[width=1\columnwidth]{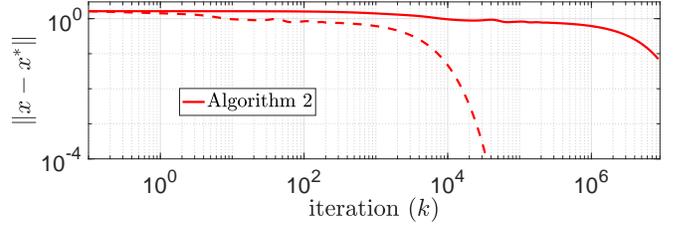}
	\caption{Distance from the \gls{NE} for Algorithm~\ref{algo:2}, on a time-varying digraph. Since the networks are sparse, Theorem~\ref{th:main2} ensures convergence only for small step sizes ($\gamma=5*10^{-4}$, $\tau=3*10^{-4}$), and convergence is slow (solid line). However, the bounds are conservative: the iteration still converges with $\tau$ $1000$ times larger than the theoretical value (dashed line). }\label{fig:4}
\end{figure}
For the case of doubly stochastic time-varying networks, we randomly generate $5$ connected graphs and for each iteration we pick one with uniform distribution. In Figure~\ref{fig:3}, we compare the performance of Algorithms~\ref{algo:1}-\ref{algo:2}, for step sizes  set to their upper bounds as in Theorems~\ref{th:main1}-\ref{th:main2}. Since the theoretical convergence rate in Theorems~\ref{th:main1}-\ref{th:main2} worsens as the number of agents grows, to show how the performance is affected in practice, we repeat the experiment for different values of $N$ and random market structures (Figure~\ref{fig:3}).
\newline \indent
Finally, in  Figure~\ref{fig:4},
we test Algorithm~\ref{algo:2} with communication topology chosen at each step with uniform distribution between two unweighted balanced directed graphs: the directed ring, where each agent $i$ can send information to the agent $i+1$ (with the convention $N+1\equiv1$), and a  graph where agent $i$
is also allowed to transmit to agent $i+2$, for all $i\in\mc{I}$.

\section{Conclusion}\label{sec:conclusion}
Nash equilibrium problems on time-varying graphs can be solved  with linear rate via  fixed-step pseudo-gradient algorithms, 
if the network is connected at every iteration and the game mapping is  Lipschitz continuous and strongly monotone. Our algorithm proved much faster than the existing gradient-based methods, when the step sizes satisfy their theoretical upper bounds.
%, even in the case of a fixed communication topology, 
%at least in our numerical experience.  
The extension 
%of our results 
to games with coupling constraints is left as future research. It would be also valuable to relax our uniform connectedness assumption, i.e., allowing for jointly strongly connected directed graphs.
%
%Finally, it would be highly valuable to relax the assumption of double-stochasticity of the mixing matrices.

\appendix
\subsection{Proof of Theorem~\ref{th:main1}}\label{app:th:main1}
\noindent We define the estimate consensus subspace $\bs{E}:=\{ \bs{y}\in\R^{Nn} \mid \y=\1_N\otimes y, \ y\in\R^n \}$ 
and its orthogonal complement $\bs{E}_{\perp}=\{ \bs{y}\in\R^{Nn}\mid (\1_N\otimes I_n)^\top \y=\0_n \}$. Thus, any vector $\x\in\R^{Nn}$ can be written as $\x=\x_{\scriptscriptstyle \parallel}+\x_{\! \scriptscriptstyle \perp}$, where $\x_{\scriptscriptstyle \parallel}=\proj_{\bs E}(\x)=\frac{1}{N}(\1_N\1_N^\top\otimes I_n) \x$, $\x_{\! \scriptscriptstyle \perp}=\proj_{\bs E_{\perp}}(\x)$, and $\x_{\scriptscriptstyle \parallel}^\top\x_{\! \scriptscriptstyle \perp}=0$. Also, we  use the shorthand notation $\F\x$ and $Fx$ in place of $\F(\x)$ and $F(x)$. 
We recast the iteration in \eqref{eq:algcompact} as
\begin{equation}\label{eq:algoperators} 
	\x^{k+1}=\proj_{\bs{\Omega}} (\xhb^k-\alpha\mc{R}^\top \bs{F}\xhb^k), \  \xhb^k=\W_{\!\!k} \x^k.
\end{equation}
Let $x^*$ be the unique \gls{NE} of the game in \eqref{eq:game}, and $\x^*=\1_N\otimes x^*$. We recall that $x^*=\proj_{{\Omega}}(x^*-\alpha Fx^*)$ by \eqref{eq:NEinclusion}, and then $\x^*=\proj_{\bs{\Omega}}(\x^*-\alpha\mc{R}^\top\F\x^*)$. Moreover, $\bs{W}_{\!\! k}\x^*=(W_k\otimes I_n)(\1_N\otimes x^*)=\1_N\otimes x^*=\x^*$;  hence $\x^*$ is a
fixed point for \eqref{eq:algoperators}. 
Let $\x^k=\x\in\R^{Nn}$ and $\xhb=\bs{W}_{\!\!k}\x=\xhb_{\scriptscriptstyle \parallel }+\xhb_{\! \scriptscriptstyle \perp}=\1_N\otimes \hat{x}_{\scriptscriptstyle \parallel}+\xhb_{\! \scriptscriptstyle \perp}\in\R^{Nn}$. Thus,  it holds that 
%
%1) The unique fixed point of $\mc{A}$ is $\x^*=\1_N x^*$, with $x^*$ the unique \gls{NE} of the game in \eqref{eq:game}. 
%
%2) The operator $\mc{A}$ is quasi-contractive.
\allowdisplaybreaks
\begin{align} 
	\nonumber 
	& \quad ~ \|\x^{k+1}-\x^*\|^2
	\\ \nonumber
	& =  \|  \proj_{\bs{\Omega}}(\xhb-\alpha\mc{R}^\top \bs{F}\xhb)-  \proj_{\bs{\Omega}}(\x^*-\alpha\mc{R}^\top \bs{F}\x^*)\|^2
	\\ \nonumber
	& \leq  \|  (\xhb-\alpha\mc{R}^\top \bs{F}\xhb)-  (\x^*-\alpha\mc{R}^\top \bs{F}\x^*)\|^2 
	\\\nonumber
	& = \begin{multlined}[t]
		\|\xhb_{\scriptscriptstyle \parallel }+\xhb_{\! \scriptscriptstyle \perp}-\x^*+\alpha\mc{R}^\top (-\bs{F}\xhb+ \bs{F}\x^* + \bs{F}\xhb_{\scriptscriptstyle \parallel} 
		- \bs{F}\xhb_{\scriptscriptstyle \parallel}) \|^2
	\end{multlined}
	%%%%%
	\\[0.02pt] \nonumber
	\\
	&\begin{aligned}[c]
		&= \|\xhb_{\scriptscriptstyle \parallel}-\x^*\|^2+\|\xhb_{\! \scriptscriptstyle \perp} \|^2 \\ 
		&\quad \ +\alpha^2  \|\mc{R}^\top(\F\xhb-\F\xhb_{\scriptscriptstyle \parallel}+\F\xhb_{\scriptscriptstyle \parallel}-\F \x^*)\|^2 \\ 
		& \quad \ -2\alpha(\xhb_{\scriptscriptstyle \parallel}-\x^*)^\top \mc{R}^\top (\F\xhb-\F \xhb_{\scriptscriptstyle \parallel}) \\
		& \quad \ -2\alpha(\xhb_{\scriptscriptstyle \parallel}-\x^*)^\top\mc{R}^\top(\F\xhb_{\scriptscriptstyle \parallel}-\F\x^*)
		\\ & \quad \
		-2\alpha \xhb_{ \scriptscriptstyle \perp}^{ \top}\mc{R}^{\! \top}(\F \xhb-\F \xhb_{\scriptscriptstyle \parallel}) 
		\\& \quad \
		-2\alpha\xhb_{\scriptscriptstyle \perp}^{ \top}\mc{R}^{\top}(\F\xhb_{\scriptscriptstyle \parallel}-\F\x^*) 
	\end{aligned}  \label{eq:addends}
	%%%%%%%%%%%%%
	\\\nonumber
	& \leq \begin{aligned}[t]
		& \|\xhb_{\scriptscriptstyle \parallel}-\x^*\|^2+\|\xhb_{\! \scriptscriptstyle \perp}\|^2+\alpha^2(\ell^2\|\xhb_{\! \scriptscriptstyle \perp}\|^2+\textstyle \frac{\ell_0^2}{N} \|\xhb_{\scriptscriptstyle \parallel}-\x^*\|^2
		\\ &
		+\textstyle \frac{2\ell_0\ell }{\sqrt{N}}\|\xhb_{\scriptscriptstyle \parallel}-\x^*\|\|\xhb_{\! \scriptscriptstyle \perp}\|)+\textstyle \frac{2\alpha\ell }{\sqrt{N}}\|\xhb_{\scriptscriptstyle \parallel}-\x^*\|\|\xhb_{\! \scriptscriptstyle \perp}\|
		\\&
		\textstyle -\frac{2\alpha\mu}{N}\|\xhb_{\scriptscriptstyle \parallel}-\x^*\|^2+2\alpha\ell\|\xhb_{\! \scriptscriptstyle \perp}\|^2 +\frac{2\alpha\ell_0 }{\sqrt{N}}\|\xhb_{\! \scriptscriptstyle \perp}\|\|\xhb_{\scriptscriptstyle \parallel}-\x^*\|,
	\end{aligned}
\end{align}
where the first inequality follows by nonexpansiveness of the projection (\cite[Prop.~4.16]{Bauschke2017}), 
and to bound the addends in \eqref{eq:addends} we used, in the order:
\begin{itemize}[nosep,noitemsep,leftmargin=*,topsep=0pt]
	\item 3\textsuperscript{rd} term: $\|\mc{R}\|=1$, Lipschitz continuity of $\bs{F}$, and $\|\F\xhb_{\scriptscriptstyle \parallel}-\F \x^*\|=\|F\hat{x}_{\scriptscriptstyle \parallel}-F x^*\|\leq \ell_0 \|\hat{x}_{\scriptscriptstyle \parallel}-x^*\|=\frac{\ell_0}{\sqrt{N}}\|\xhb_{\scriptscriptstyle \parallel}- \x^*\|$;
	\item 4\textsuperscript{th} term: $\|\mc{R}(\1\otimes (\hat{x}_{\scriptscriptstyle \parallel}-x^*))\|=\| \hat{x}_{\scriptscriptstyle \parallel}-x^*\|=\textstyle \frac{1}{\sqrt{N}}\|\xhb_{\scriptscriptstyle \parallel}-\x^*\|$;
	\item 5\textsuperscript{th} term: $(\xhb_{\scriptscriptstyle \parallel}-\x^*)^\top\mc{R}^\top(\F\xhb_{\scriptscriptstyle \parallel}-\F\x^* )=(\hat{x}_{\scriptscriptstyle \parallel}-x^*)^\top(F \hat{x}_{\scriptscriptstyle \parallel}-Fx^*)\geq \mu \|\hat{x}_{\scriptscriptstyle \parallel}-x^*\|^2=\frac{1}{N}\|\xhb_{\scriptscriptstyle \parallel}-\x^*\|^2$;
	\item 6\textsuperscript{th} term: Lipschitz continuity of $\F$;
	\item 7\textsuperscript{th} term: $\|\F\xhb_{\scriptscriptstyle \parallel}-\F\x^*\|\leq\textstyle \frac{\ell_0}{\sqrt{N}}\|\xhb_{\scriptscriptstyle \parallel}-\x^*\|$ as above.
\end{itemize}
Besides, for every $\x=\x_{\scriptscriptstyle \parallel}+\x_{\! \scriptscriptstyle \perp}\in \R^{Nn}$ and for all $\k$, it holds that $ \xhb= \W_{\!\!k}\x=\x_{\scriptscriptstyle \parallel}+\W_{\!\!k} \x_{\! \scriptscriptstyle \perp}$, where $\W_{\!\!k} \x_{\! \scriptscriptstyle \perp} \in \bs{E}_{\! \scriptscriptstyle \perp}$, by doubly stochasticity of $W_{k}$, and $\| \xhb_{\! \scriptscriptstyle \perp}\|=\| \W_{\!\!k} \x_{\! \scriptscriptstyle \perp} \|\leq {\bar{\sigma}} \|\x_{\! \scriptscriptstyle \perp}\|$ by \eqref{eq:Wcontractivity} and properties of the Kronecker product.
Therefore we can finally write, for all $\k$, for all $\x^k\in\R^{Nn}$,
\[
\begin{aligned}[b]
\ \|\x^{k+1}-\x^*\|^2 
& \leq \left[\begin{matrix}
\|\x^k_{\scriptscriptstyle \parallel} -\x^*\| \\\nonumber \| \x^k_{\! \scriptscriptstyle \perp} \|
\end{matrix}\right]^\top 
M_\alpha 
\left[\begin{matrix}
\|\x^k_{\scriptscriptstyle \parallel} -\x^*\| \\\nonumber \|\x^k_{\! \scriptscriptstyle \perp}\|
\end{matrix}\right] 
\\
&  \leq\uplambda_{\max}(M_\alpha)(\|\texttt{}\x^k_{\scriptscriptstyle \parallel} -\x^*\|^2+\|\x^k_{\! \scriptscriptstyle \perp}\|^2) 
\\
&= \uplambda_{\max}(M_\alpha)\|\x^k-\x^*\|^2.
\end{aligned}  \QEDhereeqn
\]
%where the symmetric matrix $M$ is as in \eqref{eq:M}. \hfill \QED
%%%%%%%%%%%%%%%%%%%%%%%%%%%%%%%%%%%%%%%%%%%%%%%%%%%%%%%%%%%%%%%%%%%%%%%%%%%%%%%%
\subsection{Proof of Theorem~\ref{th:main2}}\label{app:th:main2}

\noindent Let $x^*$ be the unique \gls{NE} of the game in \eqref{eq:game}, and $\x^*=\1_N\otimes x^*$. We recall that the null space $\Null(\bs{L}_k)=\bs{E} =\{ \bs{y}\in\R^{Nn} \mid \y=\1_N\otimes y, \ y\in\R^n \}$ by Standing Assumption~\ref{Ass:connectedgraph}.  Therefore, $\bs{L}_k \x^*=\0_N$ and $\x^*$ is a fixed point of the iteration in \eqref{eq:algo2:compact}  by \eqref{eq:NEinclusion}. With $\Fa^k$ as in \eqref{eq:Fa}, for all $\k$, for any $\x\in\R^{Nn}$, it holds that
%\vspace{-0.15cm}
%\[
%\begin{aligned}
%& \quad ~(\x-\x^*)^\top (\Fa^k\x-\Fa^k\x^*)
%\\ & =(\x-\x^*)^\top \gamma \mc{R}^\top(\F\x-\F\x^*)+(\x-\x^*)^\top \bs{L}_k(\x-\x^*)\\ 
%& =(\x-\x^*)^\top \gamma \mc{R}^\top(\F\x-\F\x^*)+(\x-\x^*)^\top \tilde{\bs{L}}_k(\x-\x^*)
%\end{aligned}
%\]
%\vspace{-0.15cm}
$(\x-\x^*)^\top (\Fa^k\x-\Fa^k\x^*)
=(\x-\x^*)^\top \gamma \mc{R}^\top(\F\x-\F\x^*)+(\x-\x^*)^\top \bs{L}_k(\x-\x^*)
=(\x-\x^*)^\top \gamma \mc{R}^\top(\F\x-\F\x^*)+(\x-\x^*)^\top \tilde{\bs{L}}_k(\x-\x^*),
$
where $\tilde{\bs{L}}_k=({\bs{L}}_k+{\bs{L}}_k^\top)/2=({{L}}_k+{{L}}_k^\top)\otimes I_n /2=\tilde{L}_k\otimes I_n$, and $\tilde{L}_k$ is the Laplacian of a connected graph (see §\ref{sec:balanced}) and $\uplambda_2(\tilde{L}_k)>\bar{\uplambda}$ by Assumption~\ref{Ass:lambdaunderbar}. Therefore we can apply \cite[Lemma 3]{Pavel2018} to conclude that $
(\x-\x^*)^\top (\Fa^k\x-\Fa^k\x^*)\geq  \mubar\|\x-\x^*\|^2,
$
with $\mubar>0$ as in \eqref{eq:M1}. Also,  $\Fa^k$  is Lipschitz continuous with constant $\bar{\ell}=\ell+\tilde{\sigma}$, $\tilde{\sigma}$ as in Assumption~\ref{Ass:lambdaunderbar}. Therefore we have
\[ 
\begin{aligned}
& \quad~ \|\x^{k+1}-\x^*\|^2 \\
& = \|\proj _{\bs{\Omega}}(\x^k-\tau\Fa^k(\x^k) )- \proj _{\bs{\Omega}}\bigl(\x^*-\tau \Fa^k\x^*) \|^2\ \\
& \leq \|(\x^k-\tau\Fa^k\x^k) -(\x^*-\tau \Fa^k\x^*)\|^2 \\
& = \begin{multlined}[t]
\|\x^k-\x^*\|^2-2\tau(\x^k-\x^*)^\top(\Fa^k\x^k-\Fa^k\x^*)\\ +\tau^2\|\Fa^k\x^k-\Fa^k\x^*\|^2
\end{multlined} \\
& \leq (1-2\tau \mubar+\tau^2(\ell+\tilde{\sigma})^2 ) \| \x^k-\x^*\|^2= \rho_{\gamma,\tau}\| \x^k-\x^*\|^2,
\end{aligned}
\]
where in the first inequality we used  \cite[Prop.~4.16]{Bauschke2017}, 
and $\rho_{\gamma,\tau}\in (0,1)$ if $\tau$ is chosen as in Theorem~\ref{th:main2}. \hfill $\QEDclosed$
\vspace{-0.1em}
%%%%%%%%%%%%%%%%%%%%%%%%%%%%%%%%%%%%%%%%%%%%%%%%%%%%%%%%%%%%%%%%%
\bibliographystyle{IEEEtran}
\bibliography{library}

% Generated by IEEEtran.bst, version: 1.14 (2015/08/26)
\begin{thebibliography}{10}
\providecommand{\url}[1]{#1}
\csname url@samestyle\endcsname
\providecommand{\newblock}{\relax}
\providecommand{\bibinfo}[2]{#2}
\providecommand{\BIBentrySTDinterwordspacing}{\spaceskip=0pt\relax}
\providecommand{\BIBentryALTinterwordstretchfactor}{4}
\providecommand{\BIBentryALTinterwordspacing}{\spaceskip=\fontdimen2\font plus
\BIBentryALTinterwordstretchfactor\fontdimen3\font minus
  \fontdimen4\font\relax}
\providecommand{\BIBforeignlanguage}[2]{{%
\expandafter\ifx\csname l@#1\endcsname\relax
\typeout{** WARNING: IEEEtran.bst: No hyphenation pattern has been}%
\typeout{** loaded for the language `#1'. Using the pattern for}%
\typeout{** the default language instead.}%
\else
\language=\csname l@#1\endcsname
\fi
#2}}
\providecommand{\BIBdecl}{\relax}
\BIBdecl

\bibitem{Palomar_Eldar_Facchinei_Pang_2009}
F.~Facchinei and J.~Pang, ``Nash equilibria: the variational approach,'' in
  \emph{Convex Optimization in Signal Processing and Communications}, D.~P.
  Palomar and Y.~C. Eldar, Eds.\hskip 1em plus 0.5em minus 0.4em\relax
  Cambridge University Press, 2009, p. 443–493.

\bibitem{Saad2012}
W.~{Saad}, Z.~{Han}, H.~V. {Poor}, and T.~{Basar}, ``Game-theoretic methods for
  the smart grid: An overview of microgrid systems, demand-side management, and
  smart grid communications,'' \emph{IEEE Signal Processing Magazine}, vol.~29,
  no.~5, pp. 86--105, 2012.

\bibitem{Grammatico2017}
S.~{Grammatico}, ``Dynamic control of agents playing aggregative games with
  coupling constraints,'' \emph{IEEE Transactions on Automatic Control},
  vol.~62, no.~9, pp. 4537--4548, 2017.

\bibitem{Li_Chen_Dahleh_2015}
N.~{Li}, L.~{Chen}, and M.~A. {Dahleh}, ``Demand response using linear supply
  function bidding,'' \emph{IEEE Transactions on Smart Grid}, vol.~6, no.~4,
  pp. 1827--1838, 2015.

\bibitem{Yu_VanderSchaar_Sayed_2017}
C.~{Yu}, M.~{van der Schaar}, and A.~H. {Sayed}, ``Distributed learning for
  stochastic generalized {N}ash equilibrium problems,'' \emph{IEEE Transactions
  on Signal Processing}, vol.~65, no.~15, pp. 3893--3908, 2017.

\bibitem{BelgioiosoGrammatico_ECC_2018}
G.~Belgioioso and S.~Grammatico, ``Projected-gradient algorithms for
  generalized equilibrium seeking in aggregative games are preconditioned
  forward-backward methods,'' in \emph{2018 European Control Conference}, 2018,
  pp. 2188--2193.

\bibitem{Shamma_Arslan_2005}
J.~S. {Shamma} and G.~{Arslan}, ``Dynamic fictitious play, dynamic gradient
  play, and distributed convergence to {N}ash equilibria,'' \emph{IEEE
  Transactions on Automatic Control}, vol.~50, no.~3, pp. 312--327, 2005.

\bibitem{Ghaderi_2014}
J.~Ghaderi and R.~Srikant, ``Opinion dynamics in social networks with stubborn
  agents: Equilibrium and convergence rate,'' \emph{Automatica}, vol.~50,
  no.~12, pp. 3209 -- 3215, 2014.

\bibitem{Bimpikis2014}
K.~Bimpikis, S.~Ehsani, and R.~Ilkili\c{c}, ``Cournot competition in networked
  markets,'' in \emph{15th ACM Conference on Economics and Computation}, ser.
  EC '14.\hskip 1em plus 0.5em minus 0.4em\relax ACM, 2014, p. 733.

\bibitem{Koshal_Nedic_Shanbag_2016}
J.~Koshal, A.~{Nedić}, and U.~V. Shanbhag, ``Distributed algorithms for
  aggregative games on graphs,'' \emph{Operations Research}, vol.~64, pp.
  680--704, 2016.

\bibitem{YeHu2017}
M.~{Ye} and G.~{Hu}, ``Distributed {N}ash equilibrium seeking by a consensus
  based approach,'' \emph{IEEE Transactions on Automatic Control}, vol.~62,
  no.~9, pp. 4811--4818, 2017.

\bibitem{GadjovPavel2018}
D.~{Gadjov} and L.~{Pavel}, ``A passivity-based approach to {N}ash equilibrium
  seeking over networks,'' \emph{IEEE Transactions on Automatic Control},
  vol.~64, no.~3, pp. 1077--1092, 2019.

\bibitem{SalehisadaghianiWeiPavel2019}
F.~Salehisadaghiani, W.~Shi, and L.~Pavel, ``Distributed {N}ash equilibrium
  seeking under partial-decision information via the alternating direction
  method of multipliers,'' \emph{Automatica}, vol. 103, pp. 27 -- 35, 2019.

\bibitem{TatarenkoShiNedic_CDC2018}
T.~{Tatarenko}, W.~{Shi}, and A.~{Nedić}, ``Accelerated gradient play
  algorithm for distributed {N}ash equilibrium seeking,'' in \emph{2018 IEEE
  Conference on Decision and Control (CDC)}, 2018, pp. 3561--3566.

\bibitem{Pavel2018}
L.~{Pavel}, ``Distributed {GNE} seeking under partial-decision information over
  networks via a doubly-augmented operator splitting approach,'' \emph{IEEE
  Transactions on Automatic Control}, vol.~65, no.~4, pp. 1584--1597, 2020.

\bibitem{TatarenkoNedic2019_unconstrained}
T.~Tatarenko and A.~{Nedić}, ``Geometric convergence of distributed gradient
  play in games with unconstrained action sets,'' \emph{arXiv preprint
  arXiv:1907.07144}, 2019.

\bibitem{BelgioiosoNedicGrammatico2020}
G.~{Belgioioso}, A.~{Nedić}, and S.~{Grammatico}, ``Distributed generalized
  {N}ash equilibrium seeking in aggregative games on time-varying networks,''
  \emph{IEEE Transactions on Automatic Control, DOI:10.1109/TAC.2020.3005922},
  2020.

\bibitem{SalehisadaghianiPavel2017_nondoublystochastic}
F.~Salehisadaghiani and L.~Pavel, ``Nash equilibrium seeking with non-doubly
  stochastic communication weight matrix,'' \emph{EAI Endorsed Trans. on
  Collaborative Computing}, vol.~4, no.~13, pp. 1-- 11, 2019.

\bibitem{FacchineiPang2007}
F.~Facchinei and J.~Pang, \emph{Finite-dimensional variational inequalities and
  complementarity problems}.\hskip 1em plus 0.5em minus 0.4em\relax Springer,
  2007.

\bibitem{GharesifardCortes_Doubly_2012}
B.~Gharesifard and J.~Cortés, ``Distributed strategies for generating
  weight-balanced and doubly stochastic digraphs,'' \emph{European Journal of
  Control}, vol.~18, no.~6, pp. 539 -- 557, 2012.

\bibitem{Bianchi_ECC20_ctGNE}
M.~{Bianchi} and S.~{Grammatico}, ``A continuous-time distributed generalized
  {N}ash equilibrium seeking algorithm over networks for double-integrator
  agents,'' in \emph{2020 European Control Conference}, 2020, pp. 1474--1479.

\bibitem{Bauschke2017}
H.~H. Bauschke and P.~L. {Combettes}, \emph{Convex analysis and monotone
  operator theory in Hilbert spaces}.\hskip 1em plus 0.5em minus 0.4em\relax
  Springer, 2017, vol. 2011.

\end{thebibliography}
\end{document}